\documentclass[english]{amsart}
\usepackage{a4wide}

\newtheorem{theorem}{Theorem}[section]
\newtheorem{lemma}[theorem]{Lemma}

\newtheorem{proposition}[theorem]{Proposition}
\newtheorem{corollary}[theorem]{Corollary}

\theoremstyle{definition}
\newtheorem{definition}[theorem]{Definition}

\newtheorem{remark}[theorem]{Remark}

\numberwithin{equation}{section}

\newcommand{\abs}[1]{\lvert#1\rvert}

\usepackage{graphicx}

\usepackage[fixlanguage]{babelbib}
\selectbiblanguage{english}

\usepackage[shortlabels]{enumitem}

\usepackage{amsfonts,amssymb,amsbsy,amsmath,mathrsfs,amsthm}

\usepackage{commath}

\usepackage{hyperref}
\hypersetup{pdfpagemode=UseNone, pdfstartview=FitH,
  colorlinks=true,urlcolor=red,linkcolor=blue,citecolor=black,
  pdftitle={Default Title, Modify},pdfauthor={Your Name},
  pdfkeywords={Modify keywords}}

\RequirePackage{babel}

\providecommand{\abs}[1]{ \lvert#1  \rvert}
\providecommand{\norm}[1]{ \lVert#1  \rVert}

\newcommand{\dx}{\, d x}

\newcommand{\dt}{\, d t}

\newcommand{\dla}{\, d \lambda}

\def\Xint#1{\mathchoice
   {\XXint\displaystyle\textstyle{#1}}%
   {\XXint\textstyle\scriptstyle{#1}}%
   {\XXint\scriptstyle\scriptscriptstyle{#1}}%
   {\XXint\scriptscriptstyle\scriptscriptstyle{#1}}%
   \!\int}
\def\XXint#1#2#3{{\setbox0=\hbox{$#1{#2#3}{\int}$}
     \vcenter{\hbox{$#2#3$}}\kern-.5\wd0}}

\def\dashint{\Xint-}

\DeclareMathOperator{\BMO}{\mathrm{BMO}}
\DeclareMathOperator{\PBMO}{\mathrm{PBMO}}

\usepackage{cite}
\makeatletter
\newcommand{\citecomment}[2][]{\citen{#2}#1\citevar}
\newcommand{\citeone}[1]{\citecomment{#1}}
\newcommand{\citetwo}[2][]{\citecomment[,~#1]{#2}}
\newcommand{\citevar}{\@ifnextchar\bgroup{;~\citeone}{\@ifnextchar[{;~\citetwo}{]}}}
\newcommand{\citefirst}{\@ifnextchar\bgroup{\citeone}{\@ifnextchar[{\citetwo}{]}}}

\makeatother

\usepackage{mathtools}
\makeatletter
\newcommand{\vast}{\bBigg@{3}}
\newcommand{\Vast}{\bBigg@{4}}
\newcommand{\vastl}{\mathopen\vast}
\newcommand{\vastr}{\mathclose\vast}
\makeatother

\begin{document}

\title{Parabolic John--Nirenberg spaces with time lag}

\author{Kim Myyryl\"ainen}
\address{Department of Mathematics, Aalto University, P.O. Box 11100, FI-00076 Aalto, Finland}
\email{kim.myyrylainen@aalto.fi}

\author{Dachun Yang}
\address{Laboratory of Mathematics and Complex Systems (Ministry of Education of China), 
School of Mathematical Sciences, Beijing Normal University, Beijing 100875, People's Republic of China}
\email{dcyang@bnu.edu.cn}

\thanks{
This project was supported by 
the National Key Research and Development Program of China (Grant No.\ 2020YFA0712900)
and
the National Natural Science Foundation of China
(Grant Nos.\ 12371093 and 12071197).
The first author was also supported by the Magnus Ehrnrooth Foundation.}

\subjclass[2020]{42B35}

\keywords{Parabolic John--Nirenberg space, John--Nirenberg inequality, parabolic geometry, time lag}

\begin{abstract}
We introduce a parabolic version of the so-called John--Nirenberg space that is a generalization of functions of parabolic bounded mean oscillation. Parabolic John--Nirenberg inequalities, which give weak type estimates for the oscillation of a function, are shown in the setting of the parabolic geometry with a time lag. Our arguments are based on a parabolic Calder\'{o}n--Zygmund decomposition and a good lambda estimate. Chaining arguments are applied to change the time lag in the parabolic John--Nirenberg inequality.

\end{abstract}

\maketitle

\section{Introduction}

Functions of bounded mean oscillation $\BMO$ and their generalization, the so-called John--Nirenberg space denoted by $JN_q$ with $1<q<\infty$, were introduced by John and Nirenberg in~\cite{john_original}.
Let $\Omega$ be a domain in $\mathbb{R}^n$.
A function $f\in L_{\mathrm{loc}}^1(\Omega)$ belongs to 
the John--Nirenberg space
$JN_q(\Omega)$, $1<q<\infty$, if 
\[
\sup \sum_{i=1}^\infty \lvert Q_i \rvert \inf_{c \in \mathbb{R}} \biggl( \dashint_{Q_i} \lvert f-c \rvert \dx \biggr)^q < \infty ,
\]
where the supremum is taken over countable collections $\{Q_i\}_{i\in\mathbb N}$ of pairwise disjoint subcubes of $\Omega$.
A particularly useful result for $\BMO$ is the John--Nirenberg lemma which gives an exponential decay estimate for the mean oscillation of a function in $\BMO$.
Moreover, the John--Nirenberg lemma for $JN_q$ gives a weak type estimate for the oscillation of a function in $JN_q$ implying that the John--Nirenberg space
is a subset of weak $L^q$.
The corresponding time-dependent theory 
was initiated by Moser in~\cite{moser1964,moser1967}
to study the regularity of parabolic partial differential equations,
and parabolic $\BMO$ was explicitly defined by Fabes and Garofalo in~\cite{fabesgarofalo}.
The proof of the parabolic John--Nirenberg lemma 
requires genuinely new ideas compared to the time independent case.
The theory of parabolic $\BMO$ 
has been further studied in~\cite{aimar,kinnunenSaariParabolicWeighted,KinnunenMyyryYang2022,localtoglobal,KinnunenMyyryYangZhu2022,KinnunenMyyry2023}.
See also~\cite{martin1994} for the one-dimensional case.

The purpose of this paper is to introduce and study a parabolic version of the John--Nirenberg space in the parabolic geometry with a time lag $0<\gamma<1$.
This geometry is motivated by certain parabolic nonlinear partial differential equations, see~\cite{aimar,kinnunenSaariParabolicWeighted,KinnunenMyyryYang2022,localtoglobal,KinnunenMyyryYangZhu2022,KinnunenMyyry2023}.
Let $\Omega_T = \Omega \times (0,T)$ be a space-time cylinder in $\mathbb{R}^{n+1}$.
A function $f\in L_{\mathrm{loc}}^1(\Omega_T)$ belongs to 
the parabolic John--Nirenberg space
$PJN_q^+(\Omega_T)$, $1<q<\infty$, if 
\[
\sup \sum_{i=1}^\infty \lvert R_i^+(\gamma) \rvert \inf_{c \in \mathbb{R}} \Biggl( \dashint_{R_i^+(\gamma)} (f-c)_+ \dx \dt + \dashint_{R_i^-(\gamma)} (f-c)_- \dx \dt \Biggr)^q < \infty ,
\]
where the supremum is taken over countable collections $\{R_i\}_{i\in\mathbb N}$ of pairwise disjoint 
space-time 
subrectangles of $\Omega_T$.
Here $R_i$ is a parabolic rectangle in the parabolic $p$-geometry with $1< p<\infty$ and $0<\gamma<1$ is the time lag, see Section~\ref{propertiesPJNq} for notation. 
In a similar way to the standard setting, this space is a generalization of parabolic $\BMO$ in a sense that parabolic $JN_q$ contains parabolic $\BMO$, and parabolic $\BMO$ is obtained as the limit of parabolic John--Nirenberg spaces as $q\to\infty$.
A one-sided John--Nirenberg space has been considered by Berkovits~\cite{berkovits2012} in the case $p=1$, but the geometry is not related to nonlinear parabolic partial differential equations.
For recent 
developments
in the theory of John--Nirenberg spaces in the standard setting, we refer to~\cite{myyry2022, kinnunenmyyry2023a,takala2022,tao2021survey,kortetakala2023,dafni2018,tao2022,tao2019,milman2021,marolasaari,molla2022}.

Our main results are parabolic John--Nirenberg inequalities for 
parabolic $JN_q$.
The main challenges are the time lag between 
the two mean oscillations
in the definition and the parabolic geometry.
The proof of the parabolic John--Nirenberg inequality in Theorem~\ref{local_pJN} is based on a parabolic Calder\'{o}n--Zygmund decomposition
and a good lambda inequality.
Theorem~\ref{global_pJN} shows that by applying suitable parabolic chaining arguments we may improve the obtained John--Nirenberg lemma by changing the time lag.
This allows us to conclude that the parabolic John--Nirenberg space 
is independent of the size of the time lag,
see Corollary~\ref{equivPJNq}.
For more about chaining techniques in the parabolic geometry, see~\cite{localtoglobal,forward-in-time,KinnunenMyyry2023}.

\section{Definition and properties of parabolic John--Nirenberg spaces}
\label{propertiesPJNq}

The underlying space throughout is $\mathbb{R}^{n+1}=\{(x,t):x=(x_1,\dots,x_n)\in\mathbb R^n,t\in\mathbb R\}$.
Unless otherwise stated, constants are positive and the dependencies on parameters are indicated in the brackets.
The Lebesgue measure of a measurable subset $A$ of $\mathbb{R}^{n+1}$ is denoted by $\lvert A\rvert$.
A cube $Q$ is a bounded interval in $\mathbb R^n$, with edges parallel to the coordinate axes and equally long, that is,
$Q=Q(x,L)=\{y \in \mathbb R^n: \lvert y_i-x_i\rvert \leq L,\,i=1,\dots,n\}$
with $x\in\mathbb R^n$ and $L>0$. 
The point $x$ is the center of the cube and $L$ is the edge length of the cube. 
Instead of Euclidean cubes, we work with the following collection of parabolic rectangles in $\mathbb{R}^{n+1}$.

\begin{definition}\label{def_parrect}
Let $1<p<\infty$, $x\in\mathbb R^n$, $L>0$ and $t \in \mathbb{R}$.
A parabolic rectangle centered at $(x,t)$ with edge length $L$ is
\[
R = R(x,t,L) = Q(x,L) \times (t-L^p, t+L^p)
\]
and its upper and its lower parts are
\[
R^+(\gamma) = Q(x,L) \times (t+\gamma L^p, t+L^p) 
\quad\text{and}\quad
R^-(\gamma) = Q(x,L) \times (t - L^p, t - \gamma L^p) ,
\]
where $-1 < \gamma < 1$ is called the time lag.
\end{definition}

Note that $R^-(\gamma)$ is the reflection of $R^+(\gamma)$ with respect to the time slice $\mathbb{R}^n \times \{t\}$.
The spatial edge length of a parabolic rectangle $R$ is denoted by $l_x(R)=L$ and the time length by $l_t(R)=2L^p$.
For short, we write $R^\pm$ for $R^{\pm}(0)$.
The top of a rectangle $R = R(x,t,L)$ is $Q(x,L) \times\{t+L^p\}$
and the bottom is $Q(x,L) \times\{t-L^p\}$.
The $\lambda$-dilate of $R$  with $\lambda>0$ is denoted by $\lambda R = R(x,t,\lambda L)$.

The integral average of $f \in L^1(A)$ in measurable set $A\subset\mathbb{R}^{n+1}$, with $0<|A|<\infty$, is denoted by
\[
f_A = \dashint_A f \dx \dt = \frac{1}{\lvert A\rvert} \int_A f(x,t)\dx\dt .
\]
The positive and the negative parts of a function $f$ are denoted by
\[
f_+ = \max\{ f, 0 \} 
\quad\text{and}\quad
f_- = - \min\{ f , 0 \} .
\]
Let $\Omega \subset \mathbb{R}^{n}$ be an open set and $T>0$.
A space-time cylinder is denoted by $\Omega_T=\Omega\times(0,T)$.
It is possible to consider space-time cylinders $\Omega\times(t_1,t_2)$ with $t_1<t_2$, but we focus on $\Omega_T$.

We recall the definition of the parabolic $\BMO$.
The differentials $\dx \dt$ in integrals are omitted in the sequel.

\begin{definition}
Let $\Omega \subset \mathbb{R}^{n}$ be a domain, $T>0$, $ 0 \leq \gamma < 1$ and $0< r < \infty$.
A function $f \in L_{\mathrm{loc}}^r(\Omega_T)$ belongs to 
$\PBMO_{\gamma,r}^{+}(\Omega_T)$ if
\[
\norm{f}_{\PBMO_{\gamma,r}^{+}(\Omega_T)} 
= \sup_{R \subset \Omega_T} \inf_{c \in \mathbb{R}} \Biggl( \dashint_{R^+(\gamma)} (f-c)_+^r 
+ \dashint_{R^-(\gamma)} (f-c)_-^r \Biggr)^\frac{1}{r} < \infty.
\]
If the condition above holds with the time axis reversed, then $f \in \PBMO_{\gamma,r}^{-}(\Omega_T)$.
\end{definition}

This section discusses basic properties of the parabolic John--Nirenberg space.
We begin with the definition.

\begin{definition}
Let $\Omega \subset \mathbb{R}^{n}$ be a domain, $T>0$, $ 0 \leq \gamma < 1$, $1 < q < \infty$ and $0 < r < q$.
A function $f \in L_{\mathrm{loc}}^r(\Omega_T)$ belongs to 
$PJN_{q,\gamma,r}^{+}(\Omega_T)$ if 
\[
\norm{f}_{PJN_{q,\gamma,r}^{+}(\Omega_T)}^q = \sup \sum_{i=1}^\infty \lvert R_i^+(\gamma) \rvert \inf_{c \in \mathbb{R}} \Biggl( \dashint_{R_i^+(\gamma)} (f-c)_+^r  + \dashint_{R_i^-(\gamma)} (f-c)_-^r \Biggr)^\frac{q}{r} < \infty ,
\]
where the supremum is taken over countable collections $\{R_i\}_{i\in\mathbb N}$ of pairwise disjoint parabolic subrectangles of $\Omega_T$.
If the condition above holds with the time axis reversed, then $f \in PJN_{q,\gamma,r}^{-}(\Omega_T)$.
\end{definition}

We shall write $PJN_{q}^{+}$ and $\norm{f}$ whenever parameters are clear from the context or are not of importance.
Observe that  $f \in L_{\mathrm{loc}}^r(\Omega_T)$ belongs to $PJN_{q,\gamma,r}^{+}(\Omega_T)$ if and only if
for every collection $\{R_i\}_{i\in\mathbb{N}}$ of parabolic rectangles there exist constants $c_i\in\mathbb R$, that may depend on $R_i$, with
\[
\sum_{i=1}^\infty \lvert R_i^+(\gamma) \rvert \biggl( \dashint_{R_i^+(\gamma)} (f-c_i)_+^r \biggr)^\frac{q}{r}
\le M
\quad\text{and}\quad
\sum_{i=1}^\infty \lvert R_i^-(\gamma) \rvert \biggl( \dashint_{R_i^-(\gamma)} (f-c_i)_-^r \biggr)^\frac{q}{r}
\le M,
\]
where  $M\in\mathbb R$ is a constant that is independent of $\{R_i\}_{i\in\mathbb{N}}$.

The next lemma shows that for every parabolic rectangle $R$, there exists a constant $c_R$, depending on $R$, for which the infimum in the definition above is attained.
In the sequel, this minimal constant is denoted by $c_R$.
The proof is analogous to that of the parabolic $\BMO$ in~\cite{KinnunenMyyryYang2022}, and thus is omitted here.

\begin{lemma}
\label{PJNp_constant}
Let $\Omega_T \subset \mathbb{R}^{n+1}$ be a space-time cylinder, $ 0 \leq \gamma < 1$, $1< q < \infty$ and $0<r<q$.
Assume that $f\in PJN_{q,\gamma,r}^{+}(\Omega_T)$.
Then for every parabolic rectangle $R\subset\Omega_T$, there exists a constant $c_R\in\mathbb R$, that may depend on $R$, such that 
\[
\dashint_{R^+(\gamma)} (f-c_R)_+^r  + \dashint_{R^-(\gamma)} (f-c_R)_-^r = \inf_{c \in \mathbb{R}} \Biggl( \dashint_{R^+(\gamma)} (f-c)_+^r + \dashint_{R^-(\gamma)} (f-c)_-^r \Biggr) .
\]
In particular, it holds that
\[
\sup \sum_{i=1}^\infty \lvert R_i^+(\gamma) \rvert \Biggl( \dashint_{R_i^+(\gamma)} (f-c_{R_i})_+^r + \dashint_{R_i^-(\gamma)} (f-c_{R_i})_-^r \Biggr)^\frac{q}{r} = \norm{f}_{PJN_{q,\gamma,r}^{+}(\Omega_T)}^q .
\]
\end{lemma}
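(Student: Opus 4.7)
The plan is to show that the map
\[
F(c) = \dashint_{R^+(\gamma)} (f-c)_+^r + \dashint_{R^-(\gamma)} (f-c)_-^r, \qquad c \in \mathbb{R},
\]
is a continuous, coercive function of a single real variable, and then invoke the extreme value theorem. Fix a parabolic rectangle $R \subset \Omega_T$ throughout; since $f \in L^r_{\mathrm{loc}}(\Omega_T)$, the integrand is finite for every $c \in \mathbb{R}$, so $F$ is well-defined and nonnegative.

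First I would verify continuity. For $c_n \to c$ in $\mathbb{R}$, the pointwise convergence $(f-c_n)_{\pm}^r \to (f-c)_{\pm}^r$ holds since $x \mapsto x_{\pm}^r$ is continuous for $r>0$. A domination of the form $(f-c_n)_{\pm}^r \leq C_r(|f|^r + (|c|+1)^r)$, valid on the bounded range $|c_n| \leq |c|+1$ (using $(a+b)^r \leq 2^r(a^r + b^r)$ when $r \geq 1$ and $(a+b)^r \leq a^r + b^r$ when $0<r\leq 1$), produces an $L^1(R^{\pm}(\gamma))$ majorant. Dominated convergence then gives $F(c_n) \to F(c)$.

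Next, for coercivity, I would observe that $f$ is finite almost everywhere on $R^{\pm}(\gamma)$, so there is a truncation level $M>0$ with $|\{(x,t)\in R^{-}(\gamma): f(x,t) \leq M\}| \geq \tfrac{1}{2}|R^{-}(\gamma)|$. For $c > 2M$ and any point of this set, $(f-c)_{-} = c - f \geq c/2$, hence
\[
\dashint_{R^{-}(\gamma)} (f-c)_-^r \geq \frac{1}{2}\left(\frac{c}{2}\right)^r \longrightarrow \infty \quad\text{as } c \to +\infty.
\]
The analogous estimate on $R^{+}(\gamma)$ gives $F(c) \to \infty$ as $c \to -\infty$. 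Combined with continuity, this forces the infimum of $F$ to be attained on some compact interval, and the extreme value theorem delivers the desired $c_R \in \mathbb{R}$.

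The last assertion is then essentially bookkeeping: for each collection $\{R_i\}_{i\in\mathbb{N}}$ of pairwise disjoint parabolic subrectangles, the first part provides $c_{R_i}$ realizing the infimum for $R_i$, so the sum with $c_{R_i}$ inserted equals the sum with the infimum. Taking the supremum over all such collections yields $\|f\|_{PJN_{q,\gamma,r}^{+}(\Omega_T)}^q$. I do not anticipate a genuine obstacle; the only point that requires mild care is splitting the cases $r \geq 1$ and $0<r<1$ when producing an integrable majorant and when handling the convexity/concavity of $t \mapsto t^r$ in the coercivity step, but both cases reduce to the same argument.
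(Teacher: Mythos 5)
Your proof is correct and follows the standard continuity-plus-coercivity argument; the paper itself omits the proof and refers to the analogous lemma for parabolic $\BMO$ in~\cite{KinnunenMyyryYang2022}, which rests on exactly this idea (show $F$ is continuous via dominated convergence, coercive via a truncation-level estimate, then restrict to a compact interval and apply the extreme value theorem). Both regimes $0<r\leq 1$ and $r\geq 1$ are handled correctly in your majorant, and the final step deducing the norm identity from the pointwise minimizers is indeed immediate.
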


We list some properties of the parabolic John--Nirenberg space below.
In particular, $PJN_{q}^{+}$ is closed under addition and scaling by a positive constant. 
On the other hand, multiplication by negative constants reverses the time direction.
The proof is similar to that of the parabolic $\BMO$ in~\cite{KinnunenMyyryYang2022}, and thus is omitted here.

\begin{lemma}
\label{props_PJNp}
Let $ 0 \leq \gamma < 1$, $1< q < \infty$ and $0<r<q$.
Assume that $f$ and $ g$ belong to $PJN_{q,\gamma,r}^{+}(\Omega_T)$ and
let $PJN_{q}^\pm=PJN_{q,\gamma,r}^\pm(\Omega_T)$.
Then the following properties hold.
\begin{enumerate}[\normalfont(i), parsep=5pt, topsep=5pt]
    \item $\begin{aligned}[t]
        \norm{f+a}_{PJN_{q}^{+}} = \norm{f}_{PJN_{q}^{+}},\, a \in \mathbb{R}.
    \end{aligned}$
    \item $\begin{aligned}[t]
        \norm{f+g}_{PJN_{q}^{+}} \leq \max\{ 2^{\frac{1}{r}-1} , 2^{1-\frac{1}{r}} \} \Bigl( \norm{f}_{PJN_{q}^{+}} + \norm{g}_{PJN_{q}^{+}} \Bigr).
    \end{aligned}$
    \item $\begin{aligned}[t]
        \norm{af}_{PJN_{q}^{+}} = \begin{cases} 
    \displaystyle
      a \norm{f}_{PJN_{q}^{+}}, &a \geq 0, \\
      \displaystyle
      -a \norm{f}_{PJN_{q}^{-}}, & a < 0.
   \end{cases}
    \end{aligned}$
    \item $\begin{aligned}
        \norm{\max\{f,g\}}_{PJN_{q}^{+}} &\leq \max\{1, 2^{\frac{1}{r} -1 }\} \Bigl( \norm{f}_{PJN_{q}^{+}} + \norm{g}_{PJN_{q}^{+}} \Bigr), 
        \\
        \norm{\min\{f,g\}}_{PJN_{q}^{+}} &\leq \max\{1, 2^{\frac{1}{r} -1 }\} \Bigl( \norm{f}_{PJN_{q}^{+}} + \norm{g}_{PJN_{q}^{+}} \Bigr).
    \end{aligned}$
\end{enumerate}
\end{lemma}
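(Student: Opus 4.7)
The key tool throughout is the local functional
\[
\mathcal{N}_R(h,c) := \Biggl(\dashint_{R^+(\gamma)} (h-c)_+^r + \dashint_{R^-(\gamma)} (h-c)_-^r\Biggr)^{1/r},
\]
so that by Lemma~\ref{PJNp_constant}, $\norm{f}_{PJN_q^+}^q=\sup\sum_i|R_i^+(\gamma)|\,\mathcal{N}_{R_i}(f,c_{R_i,f})^q$ where $c_{R_i,f}$ is the minimizing constant on $R_i$. Since $|R^+(\gamma)|=|R^-(\gamma)|$, up to a common normalization $\mathcal{N}_R(h,c)$ is the $L^r$-norm on $R^+(\gamma)\cup R^-(\gamma)$ of the concatenated function equal to $(h-c)_+$ on $R^+(\gamma)$ and $(h-c)_-$ on $R^-(\gamma)$. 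This identification reduces every step below to one application of Minkowski in $L^r$ (for $r\ge 1$) or the quasi-triangle $\norm{u+v}_r^r\le\norm{u}_r^r+\norm{v}_r^r$ (for $r<1$) applied simultaneously on both halves, followed by Minkowski in $\ell^q$.

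Parts (i) and (iii) are nearly immediate. For (i) the substitution $c\mapsto c-a$ is a bijection of $\mathbb R$ with $\mathcal{N}_R(f+a,c)=\mathcal{N}_R(f,c-a)$, so the infimum over $c$ is unchanged. For (iii) with $a\ge 0$, the identities $(af-ac)_\pm=a(f-c)_\pm$ together with the bijection $c\mapsto ac$ give $\inf_c\mathcal{N}_R(af,c)=a\,\inf_c\mathcal{N}_R(f,c)$. The case $a<0$ reduces to $a=-1$, where $(-f-c)_+=(f-(-c))_-$ and $(-f-c)_-=(f-(-c))_+$ swap the roles of $R^+(\gamma)$ and $R^-(\gamma)$, yielding $\norm{-f}_{PJN_q^+}=\norm{f}_{PJN_q^-}$.

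For (ii), fix the minimizers $c_{R,f}$ and $c_{R,g}$ from Lemma~\ref{PJNp_constant}. The pointwise inequality $(f+g-c_{R,f}-c_{R,g})_\pm\le(f-c_{R,f})_\pm+(g-c_{R,g})_\pm$, followed by Minkowski on $R^+(\gamma)\cup R^-(\gamma)$ in the sense above and, when $r<1$, the elementary bound $(a^r+b^r)^{1/r}\le 2^{1/r-1}(a+b)$, yields
\[
\mathcal{N}_R(f+g,\,c_{R,f}+c_{R,g})\le\max\{1,2^{1/r-1}\}\bigl(\mathcal{N}_R(f,c_{R,f})+\mathcal{N}_R(g,c_{R,g})\bigr).
\]
Replacing the left-hand side by its minimum over $c$ only helps. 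Setting $a_i:=|R_i^+(\gamma)|^{1/q}\mathcal{N}_{R_i}(f,c_{R_i,f})$ and similarly $b_i$, I raise to the $q$-th power, sum, and apply Minkowski in $\ell^q$ (valid since $q>1$); the constant $\max\{1,2^{1/r-1}\}$ is bounded by the stated $\max\{2^{1/r-1},2^{1-1/r}\}$, completing (ii).

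For (iv), set $M:=\max\{f,g\}$ and choose the test constant $c_M^\ast:=\max\{c_{R,f},c_{R,g}\}$, relabelling so that $c_M^\ast=c_{R,f}\ge c_{R,g}$. Then $(M-c)_+=\max((f-c)_+,(g-c)_+)$ combined with the monotonicity $(g-c_{R,f})_+\le(g-c_{R,g})_+$ gives the pointwise bound $(M-c_M^\ast)_+\le(f-c_{R,f})_++(g-c_{R,g})_+$, while $(M-c)_-=\min((f-c)_-,(g-c)_-)\le(f-c)_-$ gives $(M-c_M^\ast)_-\le(f-c_{R,f})_-$. Plugging these into $\mathcal{N}_R(M,c_M^\ast)$ and running the Minkowski argument of (ii) bounds $\mathcal{N}_R(M,c_{R,M})\le\mathcal{N}_R(M,c_M^\ast)$; summing and using $\ell^q$-Minkowski produces the $\max$-inequality, and the $\min$-inequality follows from $\min\{f,g\}=-\max\{-f,-g\}$ together with (iii) applied both on the outside and inside. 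The main obstacle is exactly the choice of $c_M^\ast$: the positive-part estimate demands $c_M^\ast\ge c_{R,g}$ and the negative-part estimate demands $c_M^\ast\le c_{R,f}$; any other natural single choice leaves an uncontrollable constant gap $(c_{R,f}-c_{R,g})_\pm$, which only $c_M^\ast=\max\{c_{R,f},c_{R,g}\}$ (after the WLOG relabelling) eliminates cleanly.
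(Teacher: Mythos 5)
Your proof is correct and follows the natural, essentially standard route; the paper itself omits its proof of this lemma (deferring to the parabolic $\BMO$ analogue in the reference), so there is no textual proof to compare against, but your argument is exactly what that reference does. Two small observations. First, your constant in (ii), namely $\max\{1,2^{1/r-1}\}$, is in fact sharper than the stated $\max\{2^{1/r-1},2^{1-1/r}\}$ whenever $r>1$: the paper's constant arises if one passes through the equivalent norm mentioned in the remark after the lemma (which costs a factor $2^{1-1/r}$ when pulling the $1/r$ power inside the sum of two terms), whereas your concatenation trick on $R^+(\gamma)\cup R^-(\gamma)$, using $\lvert R^+(\gamma)\rvert=\lvert R^-(\gamma)\rvert$, applies Minkowski once and avoids that loss. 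Since the lemma only asserts an upper bound, your sharper estimate is a strengthening, not an error. Second, in (iv) you write ``bounds $\mathcal{N}_R(M,c_{R,M})\le\mathcal{N}_R(M,c_M^\ast)$'' as if this came from Minkowski; it is simply the infimum, and more to the point you should not invoke the minimizing constant $c_{R,M}$ for $M=\max\{f,g\}$ at all, since Lemma~\ref{PJNp_constant} presupposes membership in $PJN_{q,\gamma,r}^+$, which for $M$ is precisely what (iv) is establishing. Replacing $\mathcal{N}_R(M,c_{R,M})$ by $\inf_{c}\mathcal{N}_R(M,c)$ throughout removes this cosmetic circularity without changing anything else.
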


\begin{remark}
For $1\leq r<q$, the constants in (ii) and (iv) can be avoided by considering the norm
\[
\sup \left( \sum_{i=1}^\infty \lvert R_i^+(\gamma) \rvert \inf_{c \in \mathbb{R}}\Biggl[ \biggl( \dashint_{R_i^+(\gamma)} (f-c)_+^r \biggr)^\frac{1}{r} 
+ \biggl( \dashint_{R_i^-(\gamma)} (f-c)_-^r \biggr)^\frac{1}{r}\Biggr]^q \right)^\frac{1}{q} ,
\]
which is an equivalent norm with our definition. 
However, the current definition will be convenient in the proof of the John--Nirenberg lemma below.
\end{remark}

Properties (i) and (iv) of Lemma~\ref{props_PJNp} imply that every $PJN_{q,\gamma,r}^{+}$ function can be approximated pointwise by bounded $PJN_{q,\gamma,r}^{+}$ functions.

\begin{remark}
If $f\in \PBMO_{\gamma,r}^{+}(\Omega_T)$ with $\lvert \Omega_T \rvert < \infty$, then $f \in PJN_{q,\gamma,r}^{+}(\Omega_T)$ for every $1<q<\infty$.
More precisely, we have
\[
\norm{f}_{PJN_{q,\gamma,r}^{+}(\Omega_T)} \leq \lvert \Omega_T \rvert^\frac{1}{q} \norm{f}_{\PBMO_{\gamma,r}^{+}(\Omega_T)},
\]
which follows from
\begin{align*}
\sum_{i=1}^\infty \lvert R_i^+(\gamma) \rvert \inf_{c \in \mathbb{R}} \Biggl( \dashint_{R_i^+(\gamma)} (f-c)_+^r  + \dashint_{R_i^-(\gamma)} (f-c)_-^r \Biggr)^\frac{q}{r} 
&\leq 
\norm{f}_{\PBMO_{\gamma,r}^{+}(\Omega_T)}^q  \sum_{i=1}^\infty \lvert R_i^+(\gamma) \rvert \\
&\leq \norm{f}_{\PBMO_{\gamma,r}^{+}(\Omega_T)}^q \lvert \Omega_T \rvert .
\end{align*}
\end{remark}

The parabolic John--Nirenberg space is a generalization of the parabolic BMO in the sense that a function is in PBMO$^+_{\gamma,r}$ if and only if the $PJN_{q,\gamma,r}^{+}$ norm of the function is uniformly bounded as $q$ tends to infinity.

\begin{proposition}
Let $\Omega_T\subset\mathbb{R}^{n+1}$ be of finite measure, $ 0 \leq \gamma < 1$
and $0<r<\infty$. Assume that $f \in L_{\mathrm{loc}}^r(\Omega_T)$.
Then
\[
\lim_{q \to \infty} \norm{f}_{PJN_{q,\gamma,r}^{+}(\Omega_T)} = \norm{f}_{\PBMO_{\gamma,r}^{+}(\Omega_T)}.
\]
\end{proposition}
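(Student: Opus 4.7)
The plan is to prove the two inequalities $\limsup_{q\to\infty}\|f\|_{PJN_{q,\gamma,r}^{+}(\Omega_T)}\leq\|f\|_{\PBMO_{\gamma,r}^{+}(\Omega_T)}$ and $\liminf_{q\to\infty}\|f\|_{PJN_{q,\gamma,r}^{+}(\Omega_T)}\geq\|f\|_{\PBMO_{\gamma,r}^{+}(\Omega_T)}$ separately.

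For the upper bound, I would simply invoke the remark preceding the proposition, which states that $\|f\|_{PJN_{q,\gamma,r}^{+}(\Omega_T)}\leq|\Omega_T|^{1/q}\|f\|_{\PBMO_{\gamma,r}^{+}(\Omega_T)}$. Since $|\Omega_T|<\infty$, passing $q\to\infty$ gives $|\Omega_T|^{1/q}\to 1$ and hence the desired $\limsup$ bound. Note that if $\|f\|_{\PBMO_{\gamma,r}^{+}(\Omega_T)}=\infty$ the upper bound is vacuous and we only need the lower bound.

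For the lower bound, fix an arbitrary parabolic rectangle $R\subset\Omega_T$. Testing the supremum in the definition of $\|f\|_{PJN_{q,\gamma,r}^{+}(\Omega_T)}^{q}$ against the single-element collection $\{R\}$ yields
\[
\|f\|_{PJN_{q,\gamma,r}^{+}(\Omega_T)}\geq|R^{+}(\gamma)|^{1/q}\inf_{c\in\mathbb{R}}\Biggl(\dashint_{R^{+}(\gamma)}(f-c)_{+}^{r}+\dashint_{R^{-}(\gamma)}(f-c)_{-}^{r}\Biggr)^{1/r}.
\]
Since $0<|R^{+}(\gamma)|<\infty$, we have $|R^{+}(\gamma)|^{1/q}\to 1$ as $q\to\infty$, hence
\[
\liminf_{q\to\infty}\|f\|_{PJN_{q,\gamma,r}^{+}(\Omega_T)}\geq\inf_{c\in\mathbb{R}}\Biggl(\dashint_{R^{+}(\gamma)}(f-c)_{+}^{r}+\dashint_{R^{-}(\gamma)}(f-c)_{-}^{r}\Biggr)^{1/r}.
\]
Taking the supremum over all parabolic rectangles $R\subset\Omega_T$ gives $\liminf_{q\to\infty}\|f\|_{PJN_{q,\gamma,r}^{+}(\Omega_T)}\geq\|f\|_{\PBMO_{\gamma,r}^{+}(\Omega_T)}$, where the case $\|f\|_{\PBMO_{\gamma,r}^{+}(\Omega_T)}=\infty$ is handled by letting the right-hand side in the single-rectangle bound exceed any preassigned $M>0$.

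Combining the two inequalities shows $\limsup$ and $\liminf$ coincide with $\|f\|_{\PBMO_{\gamma,r}^{+}(\Omega_T)}$, so the limit exists and equals this quantity. There is no real obstacle here; the argument is a clean two-sided squeeze, with the upper estimate supplied verbatim by the preceding remark and the lower estimate obtained by testing against one rectangle at a time. The only minor point to be careful about is ensuring that $|R^{+}(\gamma)|^{1/q}\to 1$ uniformly enough for each fixed rectangle, which is automatic since the exponent $1/q$ tends to zero and $|R^{+}(\gamma)|$ is a fixed positive finite number.
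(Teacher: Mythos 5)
Your argument is correct, and it takes a genuinely different and more elementary route than the paper. The paper's proof keeps the full collection $\{R_i\}_i$ in play: it rewrites the summed quantity as the $L^q(\Omega_T)$ norm of the step function
\[
g(x,t)=\sum_{i}\chi_{R_i^+(\gamma)}(x,t)\inf_{c\in\mathbb{R}}\Biggl(\dashint_{R_i^+(\gamma)}(f-c)_+^r+\dashint_{R_i^-(\gamma)}(f-c)_-^r\Biggr)^{1/r},
\]
invokes $\|g\|_{L^q(\Omega_T)}\to\|g\|_{L^\infty(\Omega_T)}$ as $q\to\infty$ (valid because $|\Omega_T|<\infty$), and then must justify interchanging the outer supremum over collections with the limit in $q$, which is done by observing that the normalized $L^q$ averages are monotone in $q$ by H\"older's inequality. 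Your approach sidesteps the interchange entirely: the upper bound is exactly the already-established remark $\|f\|_{PJN^+_{q,\gamma,r}}\leq|\Omega_T|^{1/q}\|f\|_{\PBMO^+_{\gamma,r}}$ with $|\Omega_T|^{1/q}\to 1$, and the lower bound is obtained by testing against a single rectangle $\{R\}$ and letting $|R^+(\gamma)|^{1/q}\to 1$, then taking the supremum over $R$ after passing to the liminf. This is a clean two-sided squeeze that avoids both the $L^q\to L^\infty$ lemma and the sup--limit interchange. The only thing the paper's method buys is self-containedness (it does not rely on the preceding remark), but since that remark is proved in the paper just before the proposition, your shorter argument is perfectly legitimate in context. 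You also correctly handled the degenerate case $\|f\|_{\PBMO^+_{\gamma,r}}=\infty$, where only the lower bound is needed.
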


\begin{proof}
We may assume that $\lvert \Omega_T \rvert >0$.
Let $\{R_i\}_i$ be a collection of pairwise disjoint parabolic subrectangles of $\Omega_T$. 
Recall that if $\lvert \Omega_T \rvert < \infty$, then $\norm{\cdot}_{L^q(\Omega_T)} \to \norm{\cdot}_{L^\infty(\Omega_T)}$ as $q\to\infty$.
It follows that
\begin{align*}
& \left(  \sum_{i=1}^\infty \lvert R_i^+(\gamma) \rvert \inf_{c \in \mathbb{R}} \Biggl( \dashint_{R_i^+(\gamma)} (f-c)_+^r + \dashint_{R_i^-(\gamma)} (f-c)_-^r \Biggr)^\frac{q}{r} \right)^\frac{1}{q} \\
&\qquad=
\left( \int_{\Omega_T} \vastl( \sum_{i=1}^\infty \chi_{R_i^+(\gamma)}(x,t) \inf_{c \in \mathbb{R}} \Biggl( \dashint_{R_i^+(\gamma)} (f-c)_+^r + \dashint_{R_i^-(\gamma)} (f-c)_-^r \Biggr)^\frac{1}{r} \vastr)^q \dx\dt \right)^\frac{1}{q} \\
&\qquad\to
\sup_{(x,t)\in \Omega_T} 
\sum_{i=1}^\infty \chi_{R_i^+(\gamma)}(x,t) \inf_{c \in \mathbb{R}}  \Biggl( \dashint_{R_i^+(\gamma)} (f-c)_+^r + \dashint_{R_i^-(\gamma)} (f-c)_-^r \Biggr)^\frac{1}{r} \\
&\qquad=
\sup_{i} \inf_{c \in \mathbb{R}} \Biggl( \dashint_{R_i^+(\gamma)} (f-c)_+^r + \dashint_{R_i^-(\gamma)} (f-c)_-^r \Biggr)^\frac{1}{r}
\end{align*}
as $q\to\infty$.
Hence, we have
\begin{align*}
& \sup_{\{R_i\}_i} \lim_{q\to\infty} \left(  \sum_{i=1}^\infty \lvert R_i^+(\gamma) \rvert \inf_{c \in \mathbb{R}} \Biggl( \dashint_{R_i^+(\gamma)} (f-c)_+^r + \dashint_{R_i^-(\gamma)} (f-c)_-^r \Biggr)^\frac{q}{r} \right)^\frac{1}{q} \\
&\qquad= \sup_{\{R_i\}_i} \sup_{i} \inf_{c \in \mathbb{R}} \Biggl( \dashint_{R_i^+(\gamma)} (f-c)_+^r + \dashint_{R_i^-(\gamma)} (f-c)_-^r \Biggr)^\frac{1}{r} \\
&\qquad= \sup_{R\subset \Omega_T} \inf_{c \in \mathbb{R}} \Biggl( \dashint_{R^+(\gamma)} (f-c)_+^r + \dashint_{R^-(\gamma)} (f-c)_-^r \Biggr)^\frac{1}{r} \\
&\qquad= \norm{f}_{\PBMO_{\gamma,r}^{+}(\Omega_T)} .
\end{align*}
We can interchange the order of taking the supremum and the limit since 
\[
\left( \dashint_{\Omega_T} \vastl( \sum_{i=1}^\infty \chi_{R_i^+(\gamma)}(x,t) \inf_{c \in \mathbb{R}} \Biggl( \dashint_{R_i^+(\gamma)} (f-c)_+^r + \dashint_{R_i^-(\gamma)} (f-c)_-^r \Biggr)^\frac{1}{r} \vastr)^q \dx\dt \right)^\frac{1}{q}
\]
is an increasing function of $q$ which can be seen by H\"{o}lder's inequality.
Thus, we conclude that
\[
\lim_{q\to\infty} \norm{f}_{PJN_{q,\gamma,r}^{+}(\Omega_T)} = \norm{f}_{\PBMO_{\gamma,r}^{+}(\Omega_T)} .
\qedhere
\]
\end{proof}

\section{Parabolic John--Nirenberg inequality}

This section discusses a John--Nirenberg inequality for parabolic John--Nirenberg spaces.
The argument is based on a parabolic Calder\'{o}n--Zygmund decomposition and applies a parabolic sharp maximal function to obtain a good lambda estimate.
For short, we suppress the variables $(x,t)$ in the notation and, for example, write
\[
R^{+}(\alpha)\cap\{(f-c_{R})_+ > \lambda\}
=\{(x,t)\in R^{+}(\alpha):(f(x,t)-c_{R})_+ > \lambda\} 
\]
in the sequel.

\begin{theorem}
\label{local_pJN}
Let $R \subset \mathbb{R}^{n+1}$ be a parabolic rectangle, $0 \leq \gamma < 1$, $\gamma < \alpha < 1$, $1 < q < \infty$ and $0 < r \leq 1$.
Assume that $f \in PJN_{q,\gamma,r}^{+}(R)$.
Then there exist constants $c_R\in\mathbb R$ and $C=C(n,p,q,r,\gamma,\alpha)$ such that
\[
\lvert  R^{+}(\alpha) \cap \{ (f-c_{R})_+ > \lambda \} \rvert \leq C  \frac{\norm{f}_{PJN_{q,\gamma,r}^{+}(R)}^q}{\lambda^q} 
\]
and
\[
\lvert R^{-}(\alpha) \cap \{ (f-c_{R})_- > \lambda \} \rvert \leq C \frac{\norm{f}_{PJN_{q,\gamma,r}^{+}(R)}^q}{\lambda^q} 
\]
for every $\lambda >0$.
\end{theorem}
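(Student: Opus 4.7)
The plan is to establish both weak-type bounds in parallel via a parabolic Calder\'on--Zygmund (CZ) stopping-time decomposition at level $\lambda$, using the cushion $\alpha-\gamma > 0$ to overcome the time lag built into the definition of $PJN_{q,\gamma,r}^+$. Let $c_R$ denote the minimal constant from Lemma~\ref{PJNp_constant} applied to the parent rectangle $R$; for brevity, set $g_+ = (f-c_R)_+$ and $g_- = (f-c_R)_-$.

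For each fixed $\lambda>0$, I would first perform a parabolic CZ stopping-time decomposition inside $R$, iterating a suitable parabolic dyadic subdivision and selecting the maximal sub-rectangles $R_i\subset R$ satisfying
\[
\dashint_{R_i^+(\gamma)} g_+^r + \dashint_{R_i^-(\gamma)} g_-^r > \kappa^r \lambda^r,
\]
with $\kappa=\kappa(n,p,r,\gamma,\alpha)$ to be fixed later. By pairwise disjointness of the selected $R_i$ and the definition of the $PJN_{q,\gamma,r}^+(R)$-norm, using that $c_R$ is admissible in the infimum on each $R_i$, one obtains
\[
\lambda^q \sum_i \lvert R_i^+(\gamma) \rvert \leq \kappa^{-q} \sum_i \lvert R_i^+(\gamma) \rvert \inf_{c\in\mathbb R} \Biggl( \dashint_{R_i^+(\gamma)} (f-c)_+^r + \dashint_{R_i^-(\gamma)} (f-c)_-^r \Biggr)^{q/r} \leq \kappa^{-q} \lVert f \rVert_{PJN_{q,\gamma,r}^+(R)}^q.
\]
Since $\lvert R_i^-(\gamma)\rvert = \lvert R_i^+(\gamma)\rvert$, the same estimate controls the lower parts of the selected rectangles. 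It then remains to establish a \emph{covering step}: that, up to null sets,
\[
R^+(\alpha) \cap \{g_+ > M\lambda\} \subset \bigcup_i R_i^+(\gamma) \qquad\text{and}\qquad R^-(\alpha) \cap \{g_- > M\lambda\} \subset \bigcup_i R_i^-(\gamma)
\]
for some $M=M(n,p,r,\gamma,\alpha)$, from which both weak-type bounds follow after absorbing $M$ into the final constant.

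The covering step is the main obstacle. The stopping criterion lives on the $\gamma$-parts $R_i^{\pm}(\gamma)$, whereas the super-level sets to be covered sit inside $R^{\pm}(\alpha)$, and the parabolic time scaling $L^p$ does not match integer dyadic subdivisions. The cushion $\alpha>\gamma$ is precisely what allows any Lebesgue point $(x,t)\in R^+(\alpha)$ with $g_+(x,t)>M\lambda$ to lie in $R'^+(\gamma)$ for some parabolic rectangle $R'\subset R$ containing $(x,t)$ at a sufficiently small scale, so that the stopping procedure captures it at that scale; symmetrically for the negative part on $R^-(\alpha)$. This is the parabolic sharp-maximal / good-$\lambda$ reformulation announced in the paragraph preceding the theorem. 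Carefully setting up the parabolic dyadic grid, choosing translated/shifted candidate rectangles to compensate for the mismatched time scaling, and verifying the containment with quantitative control is the principal technical hurdle; once it is in place, the $PJN_{q,\gamma,r}^+$-estimate above yields the conclusion with constant $C=M^q\kappa^{-q}$.
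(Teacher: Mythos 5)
Your first displayed chain of inequalities is the crux, and it does not hold. You stop at rectangles $R_i$ satisfying
\[
\dashint_{R_i^+(\gamma)} (f-c_R)_+^r + \dashint_{R_i^-(\gamma)} (f-c_R)_-^r > \kappa^r\lambda^r ,
\]
where $c_R$ is the \emph{parent's} minimizer, and then assert
\[
\lambda^q \sum_i \lvert R_i^+(\gamma)\rvert \leq \kappa^{-q}\sum_i \lvert R_i^+(\gamma)\rvert \inf_{c\in\mathbb R}\Biggl(\dashint_{R_i^+(\gamma)}(f-c)_+^r + \dashint_{R_i^-(\gamma)}(f-c)_-^r\Biggr)^{q/r} ,
\]
justified by ``$c_R$ is admissible in the infimum.'' Admissibility gives $\inf_c(\cdots) \leq (\cdots\text{ at } c_R)$, which is the \emph{wrong direction}: the stopping criterion bounds the value at $c_R$ from below, but says nothing about the infimum, which can be arbitrarily small on a stopping rectangle whose local oscillation is tiny even though $f$ is far from $c_R$ there. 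So the key inequality $\sum_i\lvert R_i^+(\gamma)\rvert\lambda^q\lesssim\norm{f}^q$ is unjustified. This is exactly where $JN_q$ differs from $\BMO$: for $\BMO$ you may pay a fixed constant on each rectangle, but the $JN_q$-norm sums over a disjoint family with the infimum inside, and a single-pass CZ decomposition with a fixed comparison constant does not plug into it. If you instead stop when the \emph{local} oscillation $\inf_c(\cdots)$ exceeds $\kappa^r\lambda^r$, the measure bound becomes correct but the covering step fails, since a Lebesgue point with $(f-c_R)_+$ large can have arbitrarily small local oscillation at every small scale.

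The paper's argument is structurally different and designed precisely to sidestep this. It does not stop on an oscillation threshold at all; it selects subrectangles $S_i^+$ according to the \emph{local minimizing constants} $c_{R_i}$ (namely those with $\lambda < c_{R_i}$), uses a carefully tuned subdivision so that the containment $R_i\subset R_{i-1}^+(\gamma)$ holds and the time lag can be exploited, and then compares the selected families at two levels $\delta<\lambda$. The telescoping of constants $c_{R_j}-c_{R_{k^-}}$ between levels is turned into an oscillation via the parabolic sharp maximal function, yielding a good-$\lambda$ inequality $\lvert S(2^{1/r}\lambda)\rvert\leq A^{q/r}c_4\norm{f}^q/\lambda^q + (c_3/A)\lvert S(\lambda)\rvert$ that is then iterated. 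None of the single-pass CZ-plus-covering structure you outlined survives without this two-level comparison and iteration, so the gap is a missing idea rather than a technical loose end.
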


\begin{proof}
The beginning of the proof is similar to the John--Nirenberg lemma for parabolic BMO in~\cite{KinnunenMyyryYang2022}.
However,
the rest of the proof uses several new ideas when dealing with
weak type estimates for parabolic $JN_q$
compared to the exponential decay estimate for parabolic $\BMO$.
Let $R_0=R=R(x_0,t_0,L) = Q(x_0,L) \times (t_0-L^p, t_0+L^p)$.
By considering the function $f(x+x_0,t+t_0)$, we may assume that the center of $R_0$ is the origin, that is, $R_0 = Q(0,L) \times (-L^p, L^p)$.
By (i) of Lemma~\ref{props_PJNp}, we may assume that $c_{R_0} = 0$.
We note that it is sufficient to prove the first inequality of the theorem since
the second inequality follows from the first one by applying it to the function $-f(x,-t)$.

Let
$\norm{f} = \norm{f}_{PJN_{q,\gamma,r}^{+}(R_0)}$.
We claim that
\[
\lvert R^{+}_0(\alpha)\cap\{f_+ > \lambda \} \rvert 
\leq C  \frac{\norm{f}^q}{\lambda^q}
\]
for every $\lambda>0$.
Let $m$ be the smallest integer with
$3 + \alpha \leq 2^{pm+1} (\alpha - \gamma)$, 
that is,
\[
\frac{1}{p} \log_2 \biggl( \frac{3+ \alpha}{2 (\alpha - \gamma)} \biggr) \leq m < \frac{1}{p} \log_2 \biggl( \frac{3+ \alpha}{2 (\alpha - \gamma)} \biggr) + 1 .
\]
Let $S^+_0 = R^+_0(\alpha) = Q(0,L) \times (\alpha L^p,L^p) $. The time length of $S^+_0$ is $l_t(S^+_0) = (1-\alpha)L^p$.
We partition $S^+_0$ by dividing each spatial edge $[-L,L]$ into $2^m$ equally long intervals.
If
\[
\frac{l_t(S_{0}^+)}{\lfloor 2^{pm} \rfloor} < \frac{(1-\alpha)L^p}{2^{pm}},
\]
we divide the time interval of $S^+_0$ into $\lfloor 2^{pm} \rfloor$ equally long intervals. 
Otherwise, we divide the time interval of $S^+_0$ into $\lceil 2^{pm} \rceil$ equally long intervals.
Here $\lfloor \cdot \rfloor$ and $\lceil \cdot \rceil$ are the floor and ceiling functions, respectively.
We obtain subrectangles $S^+_1$ of  $S^+_0$ with spatial edge length 
$l_x(S^+_1)=l_x(S^+_0)/2^m = L / 2^m$ and time length either 
\[
l_t(S^+_1)=\frac{l_t(S^+_0)}{\lfloor 2^{pm} \rfloor} 
=\frac{(1-\alpha)L^p}{\lfloor 2^{pm} \rfloor} 
\quad\text{or}\quad
l_t(S^+_1)=\frac{(1-\alpha)L^p}{\lceil 2^{pm} \rceil}.
\]
For every $S^+_1$, there exists a unique rectangle $R_1$ with spatial edge length $l_x = L / 2^{m}$ and time length $l_t = 2 L^p / 2^{mp}$
such that $R_1$ has the same top as $S^+_1$.
We select those rectangles $S^+_1$ for which $\lambda < c_{R_1}$ and denote the obtained collection by $\{ S^+_{1,j} \}_j$.
If $\lambda \geq c_{R_1}$, we subdivide $S^+_1$ in the same manner as above
and select all those subrectangles $S^+_2$ for which $\lambda < c_{R_2}$ to obtain family $\{ S^+_{2,j} \}_j$.
We continue this selection process recursively.
At the $i$th step, we partition unselected rectangles $S^+_{i-1}$ by dividing each spatial edge into $2^m$ equally long intervals. 
If 
\begin{equation}
\label{JNproof_eq1}
\frac{l_t(S_{i-1}^+)}{\lfloor 2^{pm} \rfloor} < \frac{(1-\alpha)L^p}{2^{pmi}},
\end{equation}
we divide the time interval of $S^+_{i-1}$ into $\lfloor 2^{pm} \rfloor$ equally long intervals. 
If
\begin{equation}
\label{JNproof_eq2}
\frac{l_t(S_{i-1}^+)}{\lfloor 2^{pm} \rfloor} \geq \frac{(1-\alpha)L^p}{2^{pmi}},
\end{equation}
we divide the time interval of $S^+_{i-1}$ into $\lceil 2^{pm} \rceil$ equally long intervals.
We obtain subrectangles $S^+_i$. 
For every $S^+_i$, there exists a unique rectangle $R_i$ with spatial edge length $l_x = L / 2^{mi}$ and time length $l_t = 2 L^p / 2^{pmi}$
such that $R_i$ has the same top as $S^+_i$.
Select those $S^+_i$ for which $\lambda < c_{R_i}$ and denote the obtained collection by $\{ S^+_{i,j} \}_j$.
If $\lambda \geq c_{R_i}$, we continue the selection process in $S^+_i$.
In this manner we obtain a collection $\{S^+_{i,j} \}_{i,j}$ of pairwise disjoint rectangles.

Observe that if \eqref{JNproof_eq1} holds, then we have
\[
l_t(S_i^+) = \frac{l_t(S^+_{i-1})}{\lfloor 2^{pm} \rfloor} < \frac{(1-\alpha)L^p}{2^{pmi}}.
\]
On the other hand, if \eqref{JNproof_eq2} holds, then
\[
l_t(S_i^+) = \frac{l_t(S^+_{i-1})}{\lceil 2^{pm} \rceil} \leq \frac{l_t(S^+_{i-1})}{2^{pm}} \leq \dots \leq \frac{(1-\alpha)L^p}{2^{pmi}} .
\]
This gives an upper bound 
\[
l_t(S_i^+) \leq \frac{(1-\alpha)L^p}{2^{pmi}}
\]
for every $S_i^+$.

Suppose that \eqref{JNproof_eq2} is satisfied at the $i$th step.
Then we have a lower bound for the time length of $S_i^+$, because
\[
l_t(S^+_i) = \frac{l_t(S_{i-1}^+)}{\lceil 2^{pm} \rceil} \geq \frac{\lfloor 2^{pm} \rfloor}{\lceil 2^{pm} \rceil} \frac{(1-\alpha)L^p}{2^{pmi}} \geq \frac{1}{2} \frac{(1-\alpha)L^p}{2^{pmi}} .
\]
On the other hand, if \eqref{JNproof_eq1} is satisfied, then
\[
l_t(S^+_i) = \frac{l_t(S_{i-1}^+)}{\lfloor 2^{pm} \rfloor} \geq \frac{l_t(S_{i-1}^+)}{ 2^{pm}}.
\]
In this case, \eqref{JNproof_eq2} has been satisfied at an earlier step $i'$ with $i'< i$.
We obtain
\[
l_t(S^+_i) \geq \frac{l_t(S_{i-1}^+)}{ 2^{pm}} \geq \dots \geq \frac{l_t(S_{i'}^+)}{ 2^{pm(i-i')}} \geq \frac{1}{2} \frac{(1-\alpha)L^p}{ 2^{pmi}}
\]
by using the lower bound for $S_{i'}^+$.
Thus, we have 
\[
\frac{1}{2} \frac{(1-\alpha)L^p}{2^{pmi}} \leq l_t(S^+_i) \leq \frac{(1-\alpha)L^p}{2^{pmi}}
\]
for every $S^+_i$.
By using the lower bound for the time length of  $S^+_i$ and the choice of $m$, we observe that
\begin{align*}
l_t(R_i) - l_t(S^+_i) 
&\leq \frac{2 L^p}{2^{pmi}} - \frac{1}{2} \frac{(1-\alpha)L^p}{2^{pmi}} 
= \frac{1}{2} \frac{L^p}{2^{pmi}} (3+ \alpha) \\
&\leq \frac{(\alpha-\gamma) L^p}{2^{pm(i-1)}} 
= \frac{(1-\gamma)L^p}{2^{pm(i-1)}} - \frac{(1-\alpha)L^p}{2^{pm(i-1)}} \\
&\leq l_t(R^+_{i-1}(\gamma)) - l_t(S^+_{i-1}) .
\end{align*}
This implies
\begin{equation}
\label{subset}
R_{i} \subset R^+_{i-1}(\gamma)
\end{equation}
for a fixed rectangle $S^+_{i-1}$ and for every subrectangle $S^+_{i} \subset S^+_{i-1}$.
By the construction of the subrectangles $S^+_i$, we have
\begin{equation}
\label{measure1}
2^{nm} \lfloor 2^{pm} \rfloor \lvert S^+_i \rvert 
\leq \lvert S^+_{i-1} \rvert \leq 2^{nm} \lceil 2^{pm} \rceil \lvert S^+_i \rvert 
\end{equation}
and
\begin{equation}
\label{measure2}
\frac{1}{2} \frac{1-\alpha}{1-\gamma} \lvert R^+_i(\gamma) \rvert 
\leq \lvert S^+_i \rvert \leq \frac{1-\alpha}{1-\gamma} \lvert R^+_i(\gamma) \rvert .
\end{equation}

We have a collection $\{ S^+_{i,j} \}_{i,j}$ of pairwise disjoint rectangles. 
However, the rectangles in the corresponding collection $\{ S^-_{i,j} \}_{i,j}$ may overlap. 
Thus, we replace it by a subfamily $\{ \widetilde{S}^-_{i,j} \}_{i,j}$ of pairwise disjoint rectangles, which is constructed in the following way.
At the first step, choose $\{ S^-_{1,j} \}_{j}$ and denote it by $\{ \widetilde{S}^-_{1,j} \}_j$. 
Then consider the collection $\{ S^-_{2,j} \}_{j}$ where each $S^-_{2,j}$ either intersects some $\widetilde{S}^-_{1,j}$ or does not intersect any $\widetilde{S}^-_{1,j}$. 
Select the rectangles $S^-_{2,j}$ that do not intersect any $\widetilde{S}^-_{1,j}$, and denote the obtained collection by $\{ \widetilde{S}^-_{2,j} \}_j$.
At the $i$th step, choose those $S^-_{i,j}$ that do not intersect any previously selected $\widetilde{S}^-_{i',j}$, $i' < i$.
Hence, we obtain a collection $\{ \widetilde{S}^-_{i,j} \}_{i,j}$ of pairwise disjoint rectangles.
Observe that for every $S^-_{i,j}$ there exists $\widetilde{S}^-_{i',j}$ with $i' \leq i$ such that
\begin{equation}
\label{plussubset}
\text{pr}_x(S^-_{i,j}) \subset \text{pr}_x(\widetilde{S}^-_{i',j}) \quad \text{and} \quad \text{pr}_t(S^-_{i,j}) \subset 3 \text{pr}_t(\widetilde{S}^-_{i',j}) .
\end{equation}
Here pr$_x$ denotes the projection to $\mathbb R^n$ and pr$_t$ denotes the projection to the time axis.

Rename $\{ S^+_{i,j} \}_{i,j}$ and $\{ \widetilde{S}^-_{i,j} \}_{i,j}$ as $\{ S^+_{i} \}_{i}$ and $\{ \widetilde{S}^-_{j} \}_j$, respectively.
Let $S(\lambda) = \bigcup_i S^+_i$.
Note that $S^+_i$ is spatially contained in $S^-_i$, that is, $\text{pr}_x S^+_i\subset \text{pr}_x S^-_i$.
In the time direction, we have
\begin{equation}
\label{minusplussubset}
\text{pr}_t(S^+_i) \subset \text{pr}_t(R_i) 
\subset \frac{7 + \alpha}{1-\alpha} \text{pr}_t(S^-_i) ,
\end{equation}
because
\[
\biggl( \frac{7 + \alpha}{1-\alpha} + 1 \biggr) \frac{l_t(S^-_i)}{2} \geq  \frac{8}{1-\alpha}  \frac{(1-\alpha)L^p}{2^{pmi+2}} = \frac{2L^p}{2^{pmi}} = l_t(R_i) .
\]
Therefore, by~\eqref{plussubset} and~\eqref{minusplussubset}, it holds that
\begin{equation}
\label{start}
\lvert S(\lambda) \rvert = \Big\lvert \bigcup_i S^+_i \Big\rvert \leq c_1 \sum_j \lvert \widetilde{S}^-_j \rvert 
\quad\text{with}\quad
c_1 = 3\frac{7+\alpha}{1-\alpha}.
\end{equation}

Let $\lambda > \delta > 0$ and consider the collection $\{S^+_k\}_k$ for $\delta$.
Define $\mathcal{J}_k = \{ j \in \mathbb{N}: \widetilde{S}^+_j \subset S^+_k \}$.
Since each $S^+_i$ is contained in some $S^+_k$, we get the partition
\[
\bigcup_j \widetilde{S}^-_j = \bigcup_k \bigcup_{j\in \mathcal{J}_k} \widetilde{S}^-_j .
\]
Fix $k \in \mathbb{N}$. We have $\widetilde{S}^+_j \subset S^+_k$ for every $j \in \mathcal{J}_k$, where $S^+_k$ was obtained by subdividing a previous $S^+_{k^-}$ for which $a_{R_{k^-}} \leq \delta$.
Hence, it holds that $\widetilde{S}^-_j \subset R_k$ for every $j \in \mathcal{J}_k$.
From \eqref{subset}, it follows that $\widetilde{S}^-_j \subset R^+_{k^-}(\gamma)$ for every $j \in \mathcal{J}_k$.
We have
\begin{align*}
\lambda^r &< c_{R_j}^r \leq \dashint_{\widetilde{S}^-_j} (c_{R_j} - f)^r_+  + \dashint_{\widetilde{S}^-_j} f^r_+ \\
&\leq \dashint_{\widetilde{S}^-_j} (f - c_{R_j})^r_- + \dashint_{\widetilde{S}^-_j} (f - a_{R_{k^-}})^r_+ + \delta^r \\
&= I_j + II_j + \delta^r
\end{align*}
for every $j \in \mathcal{J}_k$.
Let $\mathcal{M}_k = \{j\in \mathcal{J}_k: I_j \leq II_j \}$ and
\[
\mathcal{K} = \biggl\{ k \in \mathbb{N}: \lvert R^+_{k^-}(\gamma) \rvert \leq A \Bigl\lvert \bigcup_{j \in \mathcal{M}_k} \widetilde{S}^-_j \Bigr\rvert  \biggr\} ,
\]
where $A>1$ will be specified later. 
Then for $j \in \mathcal{M}_k$ we have
\begin{align*}
\lambda^r - \delta^r < 2 \dashint_{\widetilde{S}^-_j} (f - a_{R_{k^-}})^r_+ .
\end{align*}
By summing over $j \in \mathcal{M}_k$, we obtain
\begin{align*}
(\lambda^r - \delta^r) \sum_{j \in \mathcal{M}_k} \lvert \widetilde{S}^-_j \rvert < \sum_{j \in \mathcal{M}_k} 2 \int_{\widetilde{S}^-_j} (f - a_{R_{k^-}})^r_+ \leq 2 \int_{R^+_{k^-}(\gamma)} (f - a_{R_{k^-}})^r_+ .
\end{align*}
Thus, if $k \in \mathcal{K}$, we observe that
\begin{align*}
\frac{1}{A} (\lambda^r - \delta^r)  \leq \frac{\lambda^r - \delta^r}{\lvert R^+_{k^-}(\gamma) \rvert} \sum_{j \in \mathcal{M}_k} \lvert \widetilde{S}^-_j \rvert < 2 \dashint_{R^+_{k^-}(\gamma)} (f - a_{R_{k^-}})^r_+ \leq 2 M^\sharp f(x,t)
\end{align*}
for every $(x,t) \in R_{k^-}$, where $M^\sharp f$ is the parabolic sharp maximal function defined by
\[
M^\sharp f(x,t) = \sup_{\substack{R \ni (x,t) \\ R \subset R_0}} \Biggl( \dashint_{R^+(\gamma)} (f - c_R)^r_+  + \dashint_{R^-(\gamma)} (f - c_R)^r_- \Biggr) .
\]
It holds that
\[
M^\sharp f(x,t) > \frac{1}{2A} (\lambda^r - \delta^r)
\]
for every $(x,t) \in \bigcup_{k \in \mathcal{K}} R_{k^-}$.
Thus, we obtain
\begin{align*}
\Bigl\lvert \bigcup_{k \in \mathcal{K}} \bigcup_{j\in \mathcal{M}_k} \widetilde{S}^-_j \Bigr\rvert \leq \Bigl\lvert \bigcup_{k \in \mathcal{K}} R^+_{k^-}(\gamma) \Bigr\rvert \leq \Bigl\lvert \bigcup_{k \in \mathcal{K}} R_{k^-} \Bigr\rvert \leq \bigr\lvert R_0 \cap \{ M^\sharp f > \tfrac{1}{2A} (\lambda^r - \delta^r) \} \bigl\rvert .
\end{align*}
For every 
$(x,t) \in R_0 \cap \{ M^\sharp f(x,t) > \frac{1}{2A} (\lambda^r - \delta^r) \} $,
there exists $R_{(x,t)} \subset R_0$ such that $(x,t) \in R_{(x,t)}$ and 
\[
\dashint_{R_{(x,t)}^+(\gamma)} (f - c_{R_{(x,t)}})^r_+  + \dashint_{R_{(x,t)}^-(\gamma)} (f - c_{R_{(x,t)}})^r_- > \frac{1}{2A} (\lambda^r - \delta^r)
\]
By a similar argument to the Vitali covering theorem, 
we obtain
a countable collection $\{R_i\}_i$ of pairwise disjoint parabolic rectangles such that
\[
\big\{ (x,t) \in R_0: M^\sharp f(x,t) > \tfrac{1}{2A} (\lambda^r - \delta^r) \big\} 
\subset \bigcup_{l=1}^\infty 5 R_l .
\]
Then we have
\begin{align*}
&\bigl\lvert R_0 \cap \{ M^\sharp f(x,t) > \tfrac{1}{2A} (\lambda^r - \delta^r) \} \bigr\rvert \\
&\qquad \leq \sum_{l=1}^\infty \lvert 5 R_l \rvert =  \frac{5^{n+p} 2}{1-\gamma} \sum_{l=1}^\infty \lvert R_l^+(\gamma) \rvert \\
&\qquad \leq \frac{5^{n+p} 2}{1-\gamma} \frac{ 2^\frac{q}{r} A^\frac{q}{r}}{ (\lambda^r - \delta^r)^\frac{q}{r}}  \sum_{l=1}^\infty \lvert R_l^+(\gamma) \rvert \Biggl( \dashint_{R_l^+(\gamma)} (f - c_{R_l})^r_+  + \dashint_{R_l^-(\gamma)} (f - c_{R_l})^r_- \Biggr)^\frac{q}{r} \\
&\qquad \leq \frac{c_2 A^\frac{q}{r}}{2c_1} \frac{ \norm{f}^q}{(\lambda^r - \delta^r)^\frac{q}{r}} ,
\end{align*}
where 
$c_2 = 5^{n+p} 2^{\frac{q}{r}+2} c_1 / (1-\gamma)$.
On the other hand, if $k \notin \mathcal{K}$, we have
\[
\Bigl\lvert \bigcup_{j \in \mathcal{M}_k} \widetilde{S}^-_j \Bigr\rvert \leq \frac{1}{A} \lvert R^+_{k^-}(\gamma) \rvert .
\]
Summing over all indices $k \notin \mathcal{K}$ and applying \eqref{measure1} together with \eqref{measure2}, we conclude that
\begin{align*}
\sum_{k \notin \mathcal{K}} \sum_{j \in \mathcal{M}_k} \lvert \widetilde{S}^-_j \rvert &\leq \frac{1}{A} \sum_{k \notin \mathcal{K}} \lvert R^+_{k^-}(\gamma) \rvert \leq \frac{2}{A} \frac{1-\gamma}{1-\alpha} \sum_{k \notin \mathcal{K}} \lvert S^+_{k^-} \rvert \\
&\leq \frac{2}{A} \frac{1-\gamma}{1-\alpha} 2^{nm} \lceil 2^{pm} \rceil \sum_{k \notin \mathcal{K}} \lvert S^+_k \rvert 
\leq \frac{c_3}{Ac_1} \lvert S(\delta) \rvert ,
\end{align*}
where
\begin{align*}
2^{nm} \lceil 2^{pm} \rceil &\leq 2^{nm} 2^{pm+1} \leq 2^{1+(n+p) \bigl( \frac{1}{p} \log_2\bigl( \frac{3+\alpha}{2(\alpha- \gamma)}\bigr) + 1 \bigr) }  \\
&= 2^{1+n+p} \biggl( \frac{3+\alpha}{2(\alpha- \gamma)}\biggr)^{1 + \frac{n}{p}}
= \frac{1}{2c_1} \frac{1-\alpha}{1-\gamma} c_3 .
\end{align*}
By combining the cases $k\in \mathcal{K}$ and $k \notin \mathcal{K}$, we obtain
\begin{align*}
c_1 \sum_{k} \sum_{j \in \mathcal{M}_k} \lvert \widetilde{S}^-_j \rvert &= c_1 \sum_{k \in \mathcal{K}} \sum_{j \in \mathcal{M}_k} \lvert \widetilde{S}^-_j \rvert + c_1 \sum_{k \notin \mathcal{K}} \sum_{j \in \mathcal{M}_k} \lvert \widetilde{S}^-_j \rvert \\
&\leq \frac{c_2 A^\frac{q}{r}}{2} \frac{ \norm{f}^q}{(\lambda^r - \delta^r)^\frac{q}{r}} + \frac{c_3}{A} \lvert S(\delta) \rvert .
\end{align*}

It is left to consider the case $j \notin \mathcal{M}_k$.
Using \eqref{measure2}, we have
\begin{align*}
\lambda^r - \delta^r < 2 \dashint_{\widetilde{S}^-_j} (f - c_{R_j})^r_- \leq 4 \frac{1-\gamma}{1-\alpha} \dashint_{R^-_j(\gamma)} (f - c_{R_j})^r_- \leq 4 \frac{1-\gamma}{1-\alpha} M^\sharp f(x,t)
\end{align*}
for every $(x,t) \in R_{j}$ and thus for every $(x,t) \in \bigcup_{k} \bigcup_{j\notin \mathcal{M}_k} R_{j}$.
Then arguing as before, we obtain
\begin{align*}
\Big\lvert \bigcup_{k} \bigcup_{j\notin \mathcal{M}_k} \widetilde{S}^-_j \Big\rvert &\leq \Big\lvert \bigcup_{k} \bigcup_{j\notin \mathcal{M}_k} R_j \Big\rvert 
\leq \bigl\lvert R_0 \cap \{ M^\sharp f > \tfrac{1}{4} \tfrac{1-\alpha}{1-\gamma} (\lambda^r - \delta^r) \} \bigr\rvert \\
&\leq \frac{c_4}{2c_1} \frac{ \norm{f}^q}{(\lambda^r - \delta^r)^\frac{q}{r}} ,
\end{align*}
where 
$c_4 = c_2 ( 2(1-\gamma)/(1-\alpha))^{\frac{q}{r}} $.
By using~\eqref{start} and combining the cases $j \in \mathcal{M}_k$ and $j \notin \mathcal{M}_k$, we conclude that
\begin{align*}
\lvert S(\lambda) \rvert &\leq c_1 \sum_j \lvert \widetilde{S}^-_j \rvert = c_1 \sum_{k} \sum_{j \in \mathcal{M}_k} \lvert \widetilde{S}^-_j \rvert + c_1 \sum_{k} \sum_{j \notin \mathcal{M}_k} \lvert \widetilde{S}^-_j \rvert \\
&\leq A^\frac{q}{r} c_4 \frac{ \norm{f}^q}{(\lambda^r - \delta^r)^\frac{q}{r}} + \frac{c_3}{A} \lvert S(\delta) \rvert .
\end{align*}
By choosing $A = 2^{\frac{q}{r}+1} c_3$ and replacing $\lambda$ and $\delta$ by $2^\frac{1}{r} \lambda $ and $\lambda$, respectively, we have
\begin{equation}
\label{iter}
\lvert S( 2^\frac{1}{r} \lambda ) \rvert \leq A^\frac{q}{r} c_4 \frac{ \norm{f}^q}{ \lambda^q} +  \frac{c_3}{A} \lvert S(\lambda) \rvert .
\end{equation}
Let 
\[
\lambda_0 = \frac{ \norm{f}}{\lvert R_0^+(\alpha) \rvert^\frac{1}{q}} .
\]
For $0 < \lambda \leq \lambda_0$, we have
\[
\lvert R^{+}_0(\alpha) \cap \{ f(x,t)_+ > \lambda \} \rvert 
\leq \lvert R_0^+(\alpha) \rvert = \frac{\norm{f}^q}{\lambda_0^q} \leq \frac{\norm{f}^q}{\lambda^q} .
\]
Assume then that $\lambda > \lambda_0$.
There exists an integer $N \in \mathbb{N}$ such that $2^\frac{N}{r} \lambda_0 \leq \lambda < 2^\frac{N+1}{r} \lambda_0$.
We claim that
\[
\lvert S(2^\frac{K}{r} \lambda_0) \rvert \leq c_5 \frac{\norm{f}^q}{(2^{\frac{K}{r}} \lambda_0)^q}
\]
for every $ K=0,1,\dots,N$, where 
$ c_5 = 2^{\frac{q}{r}+1} A^\frac{q}{r} c_4 $.
We prove the claim by induction. 
First, note that the claim holds for $K=0$, because
\[
\lvert S(\lambda_0) \rvert \leq \lvert R_0^+(\alpha) \rvert = \frac{ \norm{f}^q}{\lambda_0^q} \leq c_5 \frac{ \norm{f}^q}{\lambda_0^q}.
\]
Then assume that the claim holds for $ K \in \{0,1,\dots,N-1\}$, that is, 
\[
\lvert S(2^\frac{K}{r} \lambda_0) \rvert \leq c_5 \frac{\norm{f}^q}{(2^\frac{K}{r} \lambda_0)^q} .
\]
We show that this implies the claim for $K+1$.
By using \eqref{iter} for $2^\frac{K}{r} \lambda_0$ we observe that
\begin{align*}
\lvert S( 2^\frac{K+1}{r} \lambda_0 ) \rvert &\leq A^\frac{q}{r} c_4 \frac{ \norm{f}^q}{ (2^\frac{K}{r} \lambda_0)^q} + \frac{c_3}{A} \lvert S(2^\frac{K}{r} \lambda_0) \rvert \leq A^\frac{q}{r} c_4  \frac{ \norm{f}^q}{ (2^\frac{K}{r} \lambda_0)^q} + \frac{c_3}{A} c_5 \frac{\norm{f}^q}{(2^\frac{K}{r} \lambda_0)^q} \\
&= 2^\frac{q}{r} \biggl( A^\frac{q}{r} c_4 + \frac{c_3}{A} c_5 \biggr) \frac{\norm{f}^q}{(2^\frac{K+1}{r} \lambda_0)^q} = c_5 \frac{\norm{f}^q}{(2^\frac{K+1}{r} \lambda_0)^q} .
\end{align*}
Therefore, the claim holds for $K+1$.

If $(x,t) \in S^+_0 \setminus S(\lambda)$, then there exists a sequence $\{S^+_l\}_{l\in\mathbb N}$
of subrectangles containing $(x,t)$ such that $c_{R_l} \leq \lambda $ and $\lvert S^+_l \rvert \to 0$ as $l \to \infty$.
Then
by \eqref{measure2} and Lemma~\ref{PJNp_constant}
it holds that
\begin{align*}
\dashint_{S^+_l} f^r_+ &\leq \dashint_{S^+_l} (f-c_{R_l})^r_+ + \lambda^r \leq \dashint_{S^+_l} (f-c_{R_l})^r_+ + \dashint_{S^-_l} (f-c_{R_l})^r_- + \lambda^r \\
&\leq
2\frac{1-\gamma}{1-\alpha} \inf_{c \in \mathbb{R}} \Biggl( \dashint_{R_l^+(\gamma)} (f-c)_+^r + \dashint_{R_l^-(\gamma)} (f-c)_-^r \Biggr) + \lambda^r \\
&\leq
2\frac{1-\gamma}{1-\alpha} \Biggl( \dashint_{R_l^+(\gamma)} (f-f(x,t))_+^r + \dashint_{R_l^-(\gamma)} (f-f(x,t))_-^r \Biggr) + \lambda^r .
\end{align*}
The Lebesgue differentiation theorem~\cite[Lemma~2.3]{KinnunenMyyryYang2022}
implies that
$f(x,t)^r_+ \leq \lambda^r$
for almost every $(x,t) \in S^+_0 \setminus S(\lambda)$.
It follows that
\[
\{ (x,t) \in S^+_0 :  f(x,t)_+ > \lambda \} \subset S(\lambda)
\]
up to a set of measure zero. 
We conclude that
\begin{align*}
\lvert S^+_0 \cap \{ f_+ > \lambda \} \rvert &\leq 
\lvert S(\lambda ) \rvert \leq \lvert S(2^\frac{N}{r} \lambda_0) \rvert \leq c_5 \frac{\norm{f}^q}{(2^{\frac{N}{r}} \lambda_0)^q} \\
&= 2^\frac{q}{r} c_5 \frac{\norm{f}^q}{(2^{\frac{N+1}{r}} \lambda_0)^q} \leq C \frac{\norm{f}^q}{\lambda^q} ,
\end{align*}
where $C = 2^\frac{q}{r} c_5$.
This completes the proof.
\end{proof}

\section{Chaining arguments and the time lag}

Weak type estimates in Theorem~\ref{local_pJN} together with chaining arguments
enable us to change the time lag in the parabolic John--Nirenberg inequality.

\begin{theorem}
\label{global_pJN}
Let $R \subset \mathbb{R}^{n+1}$ be a parabolic rectangle, $0 < \gamma <1$, $-1 < \rho \leq \gamma$, $-\rho < \sigma \leq \gamma$,
$1<q<\infty$
and $0< r \leq 1$.
Assume that $f \in PJN_{q,\gamma,r}^{+}(R)$.
Then there exist constants $c \in \mathbb{R}$
and $C=C(n,p,q,r,\gamma,\rho,\sigma)$
such that
\[
\lvert R^{+}(\rho) \cap \{ (f-c)_+ > \lambda \} \rvert 
\leq C \frac{\norm{f}_{PJN_{q,\gamma,r}^{+}(R)}^q}{\lambda^q}
\]
and
\[
\lvert R^{-}(\sigma) \cap \{ (f-c)_- > \lambda \} \rvert 
\leq C \frac{\norm{f}_{PJN_{q,\gamma,r}^{+}(R)}^q}{\lambda^q} 
\]
for every $\lambda>0$.
\end{theorem}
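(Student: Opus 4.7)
My plan is to reduce Theorem~\ref{global_pJN} to Theorem~\ref{local_pJN} by a covering-and-chaining argument. As a first reduction, applying the theorem to $g(x,t) = -f(x,-t)$ (which swaps the roles of $+/-$ and of $\rho/\sigma$) shows that it suffices to prove only the first inequality. By translation we may place the center of $R$ at the origin.

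Choose an auxiliary time lag $\alpha \in (\gamma, 1)$, for example $\alpha = (1+\gamma)/2$. On $R^+(\alpha)$, Theorem~\ref{local_pJN} with centering constant $c_R$ already gives the required weak-type bound, so the real work lies on the residual slab $T = R^+(\rho) \setminus R^+(\alpha) = Q(0,L) \times (\rho L^p, \alpha L^p)$. I would cover $T$ by a family $\{R_i\}_{i=1}^{N}$ of parabolic sub-rectangles of $R$ as follows: partition $Q(0,L)$ into $k^n$ sub-cubes $Q_j$ of edge length $L/k$, and above each $Q_j$ stack $O(k^p)$ parabolic rectangles $R_{j,\ell} = Q_j \times (t_\ell - (L/k)^p, t_\ell + (L/k)^p)$ whose upper $\alpha$-parts $R_{j,\ell}^+(\alpha)$ tile the column of $T$ over $Q_j$. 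For $k = k(n,p,\rho,\alpha)$ sufficiently large we have $R_{j,\ell} \subset R$, and Theorem~\ref{local_pJN} applied to each $R_i$ yields
\[
|R_i^+(\alpha) \cap \{(f - c_{R_i})_+ > \lambda/2\}| \leq C \|f\|_{PJN_{q,\gamma,r}^+(R)}^q / \lambda^q.
\]

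To glue these local estimates with the single choice $c = c_R$, I need a chaining bound on $|c_{R_i} - c_R|$. For any parabolic sub-rectangle $P \subset R$, the weak-type estimate of Theorem~\ref{local_pJN} applied to $P$, combined with the standard weak-$L^q$ to $L^1$-mean interpolation, gives
\[
\dashint_{P^+(\alpha)} (f - c_P)_+ + \dashint_{P^-(\alpha)} (f - c_P)_- \leq C \|f\|_{PJN_{q,\gamma,r}^+(R)} \, |P|^{-1/q}.
\]
For each $R_i$ I would construct a chain of parabolic sub-rectangles $R = P_0, P_1, \dots, P_{N_i} = R_i$ whose consecutive members have $\alpha$-parts overlapping in a set comparable in measure to the smaller rectangle; integrating the above inequality over each overlap and telescoping yields $|c_{R_i} - c_R| \leq C(n,p,\rho,\alpha) \|f\|_{PJN_{q,\gamma,r}^+(R)} |R|^{-1/q}$. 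The asymmetry of parabolic $JN_q$ (control of $(f - c)_+$ only on the upper $\gamma$-part and of $(f - c)_-$ only on the lower $\gamma$-part) forces every link of the chain to respect a fixed time orientation, and constructing such a chain that reaches an arbitrary $R_i$ placed somewhere inside $T$ is the most delicate step; this is where the parabolic chaining machinery developed in the works cited at the end of the introduction is invoked.

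To conclude, set $\lambda_0 = 2 C(n,p,\rho,\alpha) \|f\|_{PJN_{q,\gamma,r}^+(R)} |R|^{-1/q}$. For $\lambda \leq \lambda_0$ the trivial estimate $|R^+(\rho) \cap \{(f - c_R)_+ > \lambda\}| \leq |R| \leq C \|f\|^q / \lambda^q$ holds by the very definition of $\lambda_0$. For $\lambda > \lambda_0$ the chaining bound gives $|c_{R_i} - c_R| \leq \lambda/2$ for every $i$, so
\[
R_i^+(\alpha) \cap \{(f - c_R)_+ > \lambda\} \subset R_i^+(\alpha) \cap \{(f - c_{R_i})_+ > \lambda/2\},
\]
and summing the bounds from Theorem~\ref{local_pJN} over the $N = O(k^{n+p})$ sub-rectangles and combining with the direct estimate on $R^+(\alpha)$ yields the desired weak-type bound on $R^+(\rho)$.
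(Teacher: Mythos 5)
Your overall strategy — partition $R^+(\rho)$ into small parabolic rectangles, apply Theorem~\ref{local_pJN} on each, and glue via a chaining estimate on the differences of the minimizing constants — is indeed the paper's strategy. However, there are two genuine gaps in the proposal as written.

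First, the claimed reduction by $g(x,t)=-f(x,-t)$ to the first inequality alone does not work as stated, because the theorem asserts the existence of a \emph{single} constant $c$ valid for \emph{both} inequalities. Applying a proved $+$-estimate separately to $f$ and to $g$ produces two constants with no a priori relation. The symmetry argument would only be sound if you committed in advance to a constant $c$ that is itself symmetric under time reversal (for instance $c_R$), and then the chaining would have to work with exactly that choice. The paper instead takes $c=c_{\mathfrak{R}}$, where $\mathfrak{R}$ is a small central rectangle placed so that chains from the rectangles in $R^+(\rho)$ (moving in negative time) \emph{and} chains from the rectangles in $R^-(\sigma)$ (moving in positive time, with a further extension) all terminate at the same $\mathfrak{R}$. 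This shared endpoint is precisely what gives a single $c$; it does not come for free from symmetry.

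Second, the proposed chain $R=P_0,\dots,P_{N_i}=R_i$ starting at the big rectangle $R$ cannot be made to respect the time orientation that the one-sided telescoping needs. To bound $(c_{R_i}-c)_+$ one telescopes through overlaps of the form $P_{k-1}^-\cap P_k^+$, so the chain from $R_i$ must move \emph{downward} in time; arriving back at $R^+(\gamma)$ (the only place where $(f-c_R)_+$ is controlled) then forces $R_i$ to lie above the time level $\gamma L^p$, which fails exactly when $\rho<\gamma$ — the interesting case. Allowing the chain rectangles to scale up toward $R$ also does not fix this: the overlap-to-rectangle ratio blows up like $\lvert P_{k-1}\rvert/\lvert P_k\rvert$ at each step. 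The paper avoids both problems by using chain rectangles of a single fixed size $l_x=L/2^m$, moving them down a controlled amount $\sim(1+\alpha)l^p$ per step so that they remain inside $R_0$, and carefully shifting the chain ($\beta_j$-modifications, added $M_j$ links) so all chains for all $(i,j)$ land on the same $\mathfrak{R}$. This construction is the heart of the proof; relegating it to a citation leaves the main technical content unaddressed.

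Your step using the weak-$(q,q)$ bound of Theorem~\ref{local_pJN} to get an $L^1$-mean oscillation estimate on each link is a correct observation, but it is not what the paper does: the paper bounds the chain sum directly from the $PJN_q$ definition, using that the $P_k$ are pairwise disjoint separately for even and odd $k$, so that $\sum_k\lvert P_k^+\rvert\bigl(\dashint_{P_k^-}(f-c_{P_k})_-^r+\dashint_{P_k^+}(f-c_{P_k})_+^r\bigr)^{q/r}\le 2\norm{f}^q$. This is cleaner and also explains why equal-sized chain rectangles are essential.
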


\begin{proof}

The beginning of the proof is similar to~\cite[Theorem~4.1]{KinnunenMyyryYang2022}.
Let $R_0 = R$
and $\norm{f} = \norm{f}_{PJN_{q,\gamma,r}^{+}(R_0)}$.
Without loss of generality, we may assume that the center of $R_0$ is the origin.
Since $f \in PJN_{q,\gamma,r}^{+}(R_0)$, Theorem~\ref{local_pJN} holds for any parabolic subrectangle of $R_0$ and for any $\gamma < \alpha <1$.
Let  $m$ be the smallest integer with
\[
m \geq \log_2 \biggl( \frac{1+\alpha}{1-\alpha} \biggr) + \frac{1}{p-1} \biggl( 2 + \log_2 \frac{1+\alpha}{\rho+\sigma} \biggr) + 2 .
\]
Then there exists $0 \leq \varepsilon < 1$ such that
\[ 
m = \log_2 \biggl( \frac{1+\alpha}{1-\alpha} \biggr) +  \frac{1}{p-1} \biggl( 2 + \log_2 \frac{1+\alpha}{\rho+\sigma} \biggr) + 2 + \varepsilon .
\]
We partition $R^+_0(\rho) = Q(0, L) \times (\rho L^p, L^p)$ by dividing each of its spatial edges into $2^m$ equally long intervals and the time interval into 
$\lceil (1-\rho)2^{mp}/(1-\alpha)\rceil$ equally long intervals.
Denote the obtained rectangles by $U^+_{i,j}$ with $i \in \{1,\dots,2^{mn}\}$ and  $j \in \{1,\dots,\lceil (1-\rho)2^{mp}/(1-\alpha)\rceil\}$.
The spatial edge length of $U^+_{i,j}$ is $l = l_x(U^+_{i,j}) =L/2^m$
and the time length is
\[
l_t(U^+_{i,j}) = \frac{(1-\rho)L^p}{\lceil (1-\rho)2^{mp}/(1-\alpha)\rceil} .
\]
For every $U^+_{i,j}$, there exists a unique rectangle $R_{i,j}$ that has the same top as $U^+_{i,j}$.
Our aim is to construct a chain from each $U^+_{i,j}$ to a central rectangle which is of the same form as $R_{i,j}$ and is contained in $R_0$. 
This central rectangle will be specified later.
First, we construct a chain with respect to the spatial variable.
Fix $U^+_{i,j}$.
Let $P_0 = R_{i,j}$ and
\[
P_0^+ = R^+_{i,j}(\alpha) = Q_i \times (t_j - (1-\alpha)l^p, t_j).
\]
We construct a chain of cubes from $Q_i$ to the central cube $Q(0,l)$.
Let $Q'_0 = Q_i = Q(x_i, l)$ and set
\[
Q'_k = Q'_{k-1} - \frac{x_i}{\abs{x_i}} \frac{\theta l}{2}
\quad \text{for every} \ k \in \{0,\dots,N_i\} ,
\]
where $1 \leq \theta \leq \sqrt{n}$ depends on the angle between $x_i$ and the spatial axes and is chosen such that the center of $Q'_k$ is on the boundary of $Q'_{k-1}$.
We have
\[
\frac{1}{2^n} \leq \frac{\lvert Q'_k \cap Q'_{k-1} \rvert}{\lvert Q'_k \rvert} \leq \frac{1}{2}
\quad \text{for every} \ k \in \{0,\dots,N_i\} ,
\]
and $\abs{x_i} = \frac{\theta}{2} (L - bl)$,
where $b \in \{1, \dots, 2^m\}$ depends on the distance of $Q_i$ to the center of $Q_0 = Q(0,L)$.
The number of cubes in the spatial chain $\{Q'_k\}_{k=0}^{N_i}$ is
\[
N_i + 1 = \frac{\abs{x_i}}{\frac{\theta}{2} l} + 1 = \frac{L}{l} - b + 1 .
\]

Next, we also take the time variable into consideration in the construction of the chain.
Let
\[
P^+_k = Q'_k \times ( t_j - (1-\alpha)l^p - k(1+\alpha)l^p, t_j - k(1+\alpha)l^p )
\]
and $P^-_k = P^+_k - (0, (1+\alpha)l^p)$,
for every $k \in \{ 0, \dots, N_i \}$,
be the upper and the lower parts of a parabolic rectangle respectively.
These will form a chain of parabolic rectangles from $U^+_{i,j}$ to the eventual central rectangle.
Observe that every rectangle $P_{N_i}$ coincides spatially for all pairs $(i,j)$.
Consider $j=1$ and such $i$ that the boundary of $Q_i$ intersects the boundary of $Q_0$.
For such a cube $Q_i$, we have $b=1$, and thus $N = N_i = \frac{L}{l} - 1$.
In the time variable, we travel from $t_1$ the distance
\[
(N+1)(1+\alpha)l^p + (1-\alpha)l^p = (1+\alpha) L l^{p-1} + (1-\alpha) l^p .
\]
We show that the lower part of the final rectangle $P^-_N$ is contained in $R_0$.
To this end, we subtract the time length of $U^+_{i,1}$ from the distance above and observe that it is less than half of the time length of $R_0 \setminus(R_0^+(\rho) \cup R_0^-(\sigma))$.
This follows from the computation
\begin{align*}
&(1+\alpha) L l^{p-1} + (1-\alpha) l^p - \frac{(1-\rho)L^p}{\lceil(1-\rho)2^{mp}/(1-\alpha)\rceil}\\
&\qquad= \biggl( \frac{1+\alpha}{2^{m(p-1)}} + \frac{1-\alpha}{2^{mp}} - \frac{1-\rho}{\lceil (1-\rho)2^{mp}/(1-\alpha) \rceil} \biggr) L^p \\
&\qquad\leq \Biggl( \frac{1+\alpha}{2^{m(p-1)}} + \frac{1-\alpha}{2^{mp}} - \frac{1-\rho}{2\frac{(1-\rho)2^{mp}}{1-\alpha}} \Biggr) L^p  \\
&\qquad= \biggl( \frac{1+\alpha}{2^{m(p-1)}} + \frac{1-\alpha}{2^{mp+1}} \biggr) L^p 
\leq 2 \frac{1+\alpha}{2^{m(p-1)}} L^p \leq \frac{\rho+\sigma}{2} L^p ,
\end{align*}
because 
\[
m \geq \frac{1}{p-1} \biggl( 2 + \log_2 \frac{1+\alpha}{\rho+\sigma} \biggr).
\]
This implies that $P^-_N \subset R_0^+(\rho - (\rho+\sigma)/2)$.
Denote this rectangle $P_N$ by $\mathfrak{R} = \mathfrak{R}_\rho$.
This is the central rectangle where all chains will eventually end.

Let $j=1$ and assume that $i$ is arbitrary. We extend the chain $\{P_k\}_{k=0}^{N_i}$ by $N - N_i$ rectangles into the negative time direction such that the final rectangle coincides with the central rectangle $\mathfrak{R}$.
More precisely, we consider $Q'_{k+1} = Q'_{N_i}$, 
\[
P^+_{k+1} = P^+_{k} - (0, (1+\alpha)l^p) 
\quad\text{and}\quad
P^-_{k+1} = P^+_{k+1} - (0, (1+\alpha)l^p)
\]
for $k \in \{N_i, \dots, N-1\}$. 
For every $j \in \{2,\dots,\lceil (1-\rho)2^{mp}/(1-\alpha)\rceil \}$, we consider a similar extension of the chain.
The final rectangles of the chains coincide for fixed $j$ and for every $i$.
Moreover, every chain is of the same length $N+1$, and it holds that
\[
\frac{1}{2^n} \leq \eta = \frac{\lvert P_k^+ \cap P_{k-1}^- \rvert}{\lvert P_k^+ \rvert} \leq 1 .
\]

Then we consider an index $j \in \{2,\dots,\lceil(1-\rho)2^{mp}/(1-\alpha)\rceil \}$ related to the time variable.
The time distance between the current ends of the chains for pairs $(i,j)$ and $(i,1)$ is
\[
(j-1) \frac{(1-\rho) L^p}{\lceil (1-\rho)2^{mp}/(1-\alpha)\rceil} .
\]
Our objective is to have the final rectangle of the continued chain for $(i,j)$ to coincide with the end of the chain for $(i,1)$, namely, with the central rectangle $\mathfrak{R}$.
To achieve this, we modify $2^{m-1}$ intersections of $P^+_k$ and $P^-_{k+1}$ by shifting $P_k$ and also add a chain of $M_j$ rectangles traveling to the negative time direction into the chain $\{P_k\}_{k=0}^{N}$.
We shift every $P_k, k \in \{ 1, \dots, 2^{m-1} \}$, by a $\beta_j$-portion of their temporal length more than the previous rectangle was shifted, that is, we move each $P_k$ into the negative time direction a distance of $k \beta_j (1-\alpha) l^p$.
The values of $M_j \in \mathbb{N}$ and $0\leq \beta_j <1$ will be chosen later.
In other words, for every $ k \in \{ 1, \dots, 2^{m-1} \}$, we modify the definitions of $P^+_k$ by
\[
P^+_k = Q'_k \times ( t_j - (1-\alpha)l^p - k( 1+\alpha + \beta_j (1-\alpha)) l^p, t_j - k( 1+\alpha + \beta_j (1-\alpha)) l^p) ,
\]
and then add $M_j$ rectangles defined by
\[
P^+_{k+1} = P^+_{k} - (0, (1+\alpha)l^p) 
\quad\text{and}\quad
P^-_{k+1} = P^+_{k+1} - (0, (1+\alpha)l^p) 
\]
for every $k \in \{N,\dots, N+M_j-1\}$.
Note that there exists $1 \leq \tau < 2$ such that 
\[
\tau \frac{(1-\rho)2^{mp}}{1-\alpha} 
=\left\lceil\frac{(1-\rho)2^{mp}}{(1-\alpha)}\right\rceil.
\]
We would like to find such $0\leq \beta_j <1$ and $M_j \in \mathbb{N}$ that 
\[
(j-1) \frac{(1-\rho) L^p}{\lceil (1-\rho)2^{mp}/(1-\alpha)\rceil} - M_j \frac{(1+\alpha) L^p}{2^{mp}} 
= 2^{m-1} \beta_j \frac{(1-\alpha) L^p}{2^{mp}} ,
\]
which is equivalent with
\[
(j-1)\tau^{-1} (1-\alpha) - M_j (1+\alpha) = 2^{m-1} \beta_j (1-\alpha) .
\]
With this choice all final rectangles coincide.
Choose $M_j \in \mathbb{N}$ such that
\[
M_j (1+\alpha) \leq (j-1) \tau^{-1} (1-\alpha) < (M_j + 1)(1+\alpha) ,
\]
that is,
\[
0 \leq \xi = (j-1) \tau^{-1} (1-\alpha) - M_j (1+\alpha) < 1+\alpha 
\]
and
\[
\frac{(j-1)(1-\alpha)}{2 (1+\alpha)} -1 \leq \frac{(j-1)(1-\alpha)}{\tau (1+\alpha)} -1 < M_j \leq \frac{(j-1)(1-\alpha)}{\tau (1+\alpha)} \leq \frac{(j-1)(1-\alpha)}{1+\alpha} .
\]
By choosing $0\leq \beta_j <1$ such that
\[
\xi = 2^{m-1} \beta_j (1-\alpha) = 2^{\frac{2}{p-1} + 1 + \varepsilon} \biggl( \frac{1+\alpha}{\rho+\sigma} \biggr)^\frac{1}{p-1} \beta_j (1+\alpha) ,
\]
we have
\[
\beta_j = 2^{-\frac{2}{p-1} - 1 - \varepsilon} \biggl( \frac{\rho+\sigma}{1+\alpha} \biggr)^\frac{1}{p-1} \frac{\xi}{1+\alpha} .
\]
Observe that $0\leq \beta_j \leq \frac{1}{2}$ for every $j$.
For measures of the intersections of the modified rectangles, it holds that
\[
\frac{1}{2^{n+1}} \leq \frac{\lvert P_k^+ \cap P_{k-1}^- \rvert}{\lvert P_k^+ \rvert} = \eta (1-\beta_j) \leq 1 
\]
for every $ k \in \{ 1, \dots, 2^{m-1} \}$, and thus
\[
\frac{1}{2^{n+1}} \leq \tilde{\eta}_j = \frac{\lvert P_k^+ \cap P_{k-1}^- \rvert}{\lvert P_k^+ \rvert} \leq 1 
\]
for every $k \in \{1,\dots, N+M_j\}$.
Fix $U^+_{i,j}$. 
We conclude that
\begin{align*}
(c_{R_{i,j}} - c_{\mathfrak{R}} )^q_+ &= (c_{P_0} - c_{P_{N+M_j}} )^q_+  \leq  \left( \sum_{k=1}^{N+M_j} (c_{P_{k-1}} - c_{P_{k}} )^r_+ \right)^\frac{q}{r} \\
&=  \left( \sum_{k=1}^{N+M_j} \dashint_{P_{k-1}^- \cap P_k^+} (c_{P_{k-1}} - c_{P_{k}} )^r_+ \right)^\frac{q}{r} \\
&\leq  \left( \sum_{k=1}^{N+M_j} \Biggl( \dashint_{P_{k-1}^- \cap P_k^+} (c_{P_{k-1}} - f )^r_+ + \dashint_{P_{k-1}^- \cap P_k^+} (f- c_{P_{k}} )^r_+  \Biggr) \right)^\frac{q}{r} \\
&\leq  \left( \sum_{k=1}^{N+M_j} \frac{1}{\tilde{\eta}_j} \Biggl( \dashint_{P_{k-1}^-} (f - c_{P_{k-1}})^r_- + \dashint_{P_k^+} (f- c_{P_{k}} )^r_+  \Biggr) \right)^\frac{q}{r} \\
&\leq 2^{(n+1)\frac{q}{r}}  \left(  \sum_{k=0}^{N+M_j} \Biggl( \dashint_{P_{k}^-} (f - c_{P_{k}} )^r_- + \dashint_{P_k^+} (f- c_{P_{k}} )^r_+ \Biggr) \right)^\frac{q}{r} \\
&\leq 2^{(n+1)\frac{q}{r}} (N+1+M_j)^{\frac{q}{r} -1} \sum_{k=0}^{N+M_j} \Biggl( \dashint_{P_{k}^-} (f - c_{P_{k}} )^r_- + \dashint_{P_k^+} (f- c_{P_{k}} )^r_+ \Biggr)^\frac{q}{r} \\
&= 2^{(n+1)\frac{q}{r}} (N+1+M_j)^{\frac{q}{r} -1} \frac{1}{\lvert P_0^+ \rvert} \sum_{k=0}^{N+M_j} \lvert P_k^+ \rvert \Biggl( \dashint_{P_{k}^-} (f - c_{P_{k}} )^r_- + \dashint_{P_k^+} (f- c_{P_{k}} )^r_+ \Biggr)^\frac{q}{r} \\ 
&\leq 2^{(n+1)\frac{q}{r}} (N+1+M_j)^{\frac{q}{r} -1} 2 \frac{\norm{f}^q}{\lvert P_0^+ \rvert} ,
\end{align*}
where in the last inequality we used that $P_k$ are pairwise disjoint separately for even $k$ and for odd $k$.
We have
\begin{align*}
N+1+M_j &= \frac{L}{l}+M_j \leq 2^m + (j-1) \frac{1-\alpha}{1+\alpha} \\
&\leq 2^m + \frac{(1-\rho)2^{mp}}{1-\alpha} \frac{1-\alpha}{1+\alpha} 
\leq 2^m + 2^{mp+1} \leq 2^{mp+2} \\
&\leq 2^{\frac{2p}{p-1} + 3p +2} \biggl( \frac{1+\alpha}{\rho+\sigma} \biggr)^\frac{p}{p-1} \biggl( \frac{1+\alpha}{1-\alpha} \biggr)^p
\end{align*}
for every $j$.
Hence, we obtain
\[
(c_{R_{i,j}} - c_{\mathfrak{R}} )_+ 
\leq B \frac{\norm{f}}{\lvert P_0^+ \rvert^\frac{1}{q}} 
\quad\text{with}\quad
B = 2^{ \frac{1}{q} + (n+1)\frac{1}{r} } 
\biggl(
2^{\frac{2p}{p-1} + 3p +2 }
\biggl( \frac{1+\alpha}{\rho+\sigma} \biggr)^{\frac{p}{p-1}} \biggl( \frac{1+\alpha}{1-\alpha} \biggr)^p
\biggr)^{\frac{1}{r}-\frac{1}{q}} .
\]

We observe that
\begin{align*}
&\lvert R^{+}_0(\rho) \cap \{ (f-c_{\mathfrak{R}})_+ > \lambda \} \rvert 
\\
&\qquad \leq \sum_{i,j} \lvert R^+_{i,j}(\alpha) \cap \{ (f-c_{\mathfrak{R}})_+ > \lambda \} \rvert \\
&\qquad\leq \sum_{i,j} \lvert R^+_{i,j}(\alpha) \cap \{ (f-c_{R_{i,j}})_+ > \tfrac{\lambda}{2} \} \rvert 
 + \sum_{i,j} \lvert R^+_{i,j}(\alpha) \cap \{ (c_{R_{i,j}}-c_{\mathfrak{R}})_+ > \tfrac{\lambda}{2} \} \rvert .
\end{align*}
The first sum of the right-hand side can be estimated by Theorem~\ref{local_pJN} as follows
\begin{align*}
\sum_{i,j} \lvert R^+_{i,j}(\alpha) \cap \{ (f-c_{R_{i,j}})_+ > \tfrac{\lambda}{2} \} \rvert 
&\leq \sum_{i,j} 2^q C \frac{\norm{f}^q_{PJN^+_{q,\gamma,r}(R_{i,j})}}{\lambda^q} \\
&\leq 2^q C \left\lceil \frac{l_t(R_{i,j})}{l_t(U^+_{i,j})} \right\rceil \frac{\norm{f}^q_{PJN^+_{q,\gamma,r}(R_0)}}{\lambda^q} \\
&\leq 2^q C \left\lceil \frac{4}{1-\alpha} \right\rceil \frac{\norm{f}^q_{PJN^+_{q,\gamma,r}(R_0)}}{\lambda^q} \\
&\leq  \frac{2^{q+3} C}{1-\alpha} \frac{\norm{f}^q_{PJN^+_{q,\gamma,r}(R_0)}}{\lambda^q} .
\end{align*}
To estimate the second sum above, assume that $\lambda \geq 2 B \norm{f} / \lvert P_0^+ \rvert^\frac{1}{q}$.
This implies that
\begin{align*}
\lvert R^+_{i,j}(\alpha) \cap \{ (c_{R_{i,j}}-c_{\mathfrak{R}})_+ > \tfrac{\lambda}{2} \} \rvert 
&\leq \lvert R^+_{i,j}(\alpha) \cap \{  B \norm{f} / \lvert P_0^+ \rvert^\frac{1}{q} > \tfrac{\lambda}{2} \} \rvert = 0
\end{align*}
for every $i,j$.
Thus,
\[
\lvert R^{+}_0(\rho) \cap \{ (f-c_{\mathfrak{R}})_+ > \lambda \} \rvert 
\leq  \frac{2^{q+3} C}{1-\alpha} \frac{\norm{f}^q}{\lambda^q}
\]
for every $\lambda \geq 2 B \norm{f} / \lvert P_0^+ \rvert^\frac{1}{q}$.
For $0<\lambda < 2 B \norm{f} / \lvert P_0^+ \rvert^\frac{1}{q}$, we have
\begin{align*}
& \lvert R^{+}_0(\rho) \cap \{ (f-c_{\mathfrak{R}})_+ > \lambda \} \rvert 
\\
&\qquad \leq \lvert R^+_0(\rho) \rvert = 2^{(n+p)m} \frac{1-\rho}{1-\alpha} \lvert P^+_0 \rvert \\
&\qquad \leq 2^{(n+p)(\frac{2}{p-1} + 3 )} \biggl( \frac{1+\alpha}{\rho+\sigma} \biggr)^\frac{n+p}{p-1} \biggl( \frac{1+\alpha}{1-\alpha} \biggr)^{n+p}  \frac{1-\rho}{1-\alpha} 2^q B^q \frac{\norm{f}^q}{\lambda^q} .
\end{align*}

We can apply a similar chaining argument in the reverse time direction for $R_0^-(\sigma)$ with the exception that we also extend (and modify if needed) every chain such that the central rectangle $\mathfrak{R}_\sigma$ coincides with $\mathfrak{R}=\mathfrak{R}_\rho$.
A rough upper bound for the number of rectangles needed for the additional extension is given by
\[
\left\lceil \frac{(\rho+\sigma)L^p}{(1+\alpha)l^p} \right\rceil = \left\lceil \frac{\rho+\sigma}{1+\alpha} 2^{mp} \right\rceil \leq 2^{mp+1} .
\]
Thus, the constant $B$ above is
$2^{\frac{1}{r} - \frac{1}{q} }$
times larger in this case. 
This proves the second inequality of the theorem.
\end{proof}

The next corollary of Theorem~\ref{global_pJN} tells that the spaces $PJN_{q,\gamma,r}^{+}$ coincide for every $0 < \gamma <1$ and $0<r<q$.

\begin{corollary}
\label{equivPJNq}
Let $\Omega_T \subset \mathbb{R}^{n+1}$ be a space-time cylinder,
$0<\rho \leq \gamma < 1$, $1 < q < \infty$ and $0<r \leq s<q <\infty$.
Then there exist constants $c_1=c_1(n,p,q,r,s,\gamma,\rho)$ and $c_2=c_2(n,p,q,r,s,\gamma,\rho)$ such that 
\[
c_1 \norm{f}_{PJN_{q,\gamma,r}^{+}(\Omega_T)} \leq \norm{f}_{PJN_{q,\rho,s}^{+}(\Omega_T)} \leq c_2 \norm{f}_{PJN_{q,\gamma,r}^{+}(\Omega_T)} .
\]
\end{corollary}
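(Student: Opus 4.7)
The plan is to prove the two inequalities separately. The lower bound
$c_1 \norm{f}_{PJN_{q,\gamma,r}^{+}(\Omega_T)} \leq \norm{f}_{PJN_{q,\rho,s}^{+}(\Omega_T)}$
is the elementary direction. Given any pairwise disjoint collection $\{R_i\}$ of parabolic subrectangles of $\Omega_T$ and any constant $c$, I will combine three ingredients: the inclusions $R_i^\pm(\gamma)\subset R_i^\pm(\rho)$ together with the measure ratio $\lvert R_i^\pm(\rho)\rvert/\lvert R_i^\pm(\gamma)\rvert=(1-\rho)/(1-\gamma)$, which passes averages on $\gamma$-sets to averages on $\rho$-sets; Jensen's inequality, valid because $r\le s$, which replaces $L^r$ averages by $L^s$ averages; and the finite-dimensional norm comparison $(A^r+B^r)^{1/r}\leq 2^{1/r-1/s}(A^s+B^s)^{1/s}$, which recombines the positive and negative contributions. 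Using $\lvert R_i^+(\gamma)\rvert \leq \lvert R_i^+(\rho)\rvert$, infimizing over $c$ inside each summand, and supremizing over collections then yields $c_1$ depending only on $n,p,q,r,s,\gamma,\rho$. This direction uses neither $\rho>0$ nor $s<q$.

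For the harder direction $\norm{f}_{PJN_{q,\rho,s}^{+}(\Omega_T)}\leq c_2 \norm{f}_{PJN_{q,\gamma,r}^{+}(\Omega_T)}$, I invoke Theorem~\ref{global_pJN} on each $R_i$ separately, with the theorem's parameters $\rho$ and $\sigma$ both set equal to the corollary's $\rho$. This is legitimate because $0<\rho\leq\gamma<1$, so in particular $-\rho<\rho\leq\gamma$. The theorem supplies a single $c_i\in\mathbb{R}$ satisfying both
\[
\bigl\lvert R_i^{+}(\rho)\cap\{(f-c_i)_+>\lambda\}\bigr\rvert\leq C\,\frac{\norm{f}_{PJN_{q,\gamma,r}^{+}(R_i)}^{q}}{\lambda^{q}}
\quad\text{and}\quad
\bigl\lvert R_i^{-}(\rho)\cap\{(f-c_i)_->\lambda\}\bigr\rvert\leq C\,\frac{\norm{f}_{PJN_{q,\gamma,r}^{+}(R_i)}^{q}}{\lambda^{q}}
\]
for every $\lambda>0$. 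A layer-cake representation, split at the threshold $\lambda_0 = \bigl(\norm{f}_{PJN_{q,\gamma,r}^{+}(R_i)}^q/\lvert R_i^+(\rho)\rvert\bigr)^{1/q}$, upgrades these weak-type bounds to an $L^s$-average estimate of the shape
\[
\dashint_{R_i^{+}(\rho)}(f-c_i)_+^{s}+\dashint_{R_i^{-}(\rho)}(f-c_i)_-^{s}\leq K\,\frac{\norm{f}_{PJN_{q,\gamma,r}^{+}(R_i)}^{s}}{\lvert R_i^+(\rho)\rvert^{s/q}},
\]
where the convergence of the Cavalieri integral at infinity is precisely where $s<q$ enters.

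The final step is a summation/sup-exchange argument. Raising to the power $q/s$, multiplying by $\lvert R_i^+(\rho)\rvert$, and infimizing over $c$ produces a pointwise-in-$i$ estimate bounded by $K^{q/s}\norm{f}_{PJN_{q,\gamma,r}^{+}(R_i)}^{q}$. Summing over $i$ and using pairwise disjointness of $\{R_i\}$---any subrectangle collection chosen inside each $R_i$ yields, after union, a disjoint collection inside $\Omega_T$, and a standard $\varepsilon/2^i$ choice shows $\sum_i \norm{f}_{PJN_{q,\gamma,r}^{+}(R_i)}^{q}\leq \norm{f}_{PJN_{q,\gamma,r}^{+}(\Omega_T)}^{q}$---followed by the supremum over $\{R_i\}$ delivers the second inequality. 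The main obstacle is this upper direction: the weak-to-strong Marcinkiewicz-type upgrade rests crucially on the strict inequality $s<q$, and the hypothesis $\rho>0$ is precisely what allows Theorem~\ref{global_pJN} to be applied with $\sigma=\rho$.
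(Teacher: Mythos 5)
Your overall strategy is the same as the paper's: the elementary direction by Jensen, the measure-ratio factor $(1-\rho)/(1-\gamma)$, and a finite-dimensional $\ell^r \to \ell^s$ comparison; the harder direction by applying Theorem~\ref{global_pJN} on each $R_i$ with $\sigma=\rho$, upgrading the weak type bound to an $L^s$ average via a Cavalieri/layer-cake split at $\lambda_0 = \norm{f}_{PJN^+_{q,\gamma,r}(R_i)}/\lvert R_i^+(\rho)\rvert^{1/q}$, and then summing over $i$ using the nesting of subcollections. Your $\varepsilon/2^i$ justification of $\sum_i\norm{f}_{PJN^+_{q,\gamma,r}(R_i)}^q\leq\norm{f}_{PJN^+_{q,\gamma,r}(\Omega_T)}^q$ is the right argument, and your identification of where $s<q$ and $\rho>0$ are used is accurate.

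There is one genuine gap. Theorem~\ref{global_pJN} (and the underlying Theorem~\ref{local_pJN}) is stated only for $0<r\leq1$, but the corollary allows $0<r\leq s<q$ with $r$ possibly greater than $1$ (e.g.\ $r=s=2$, $q=3$). You invoke Theorem~\ref{global_pJN} for the $PJN^+_{q,\gamma,r}$ norm without checking this restriction, so the argument as written fails whenever $r>1$. The fix, which is how the paper handles it, is a short bootstrap: first prove the upper inequality for some $r'\in(0,1]$ with $r'\leq r$, obtaining $\norm{f}_{PJN^+_{q,\rho,s}}\leq c_2'\norm{f}_{PJN^+_{q,\gamma,r'}}$; then apply your already-established elementary direction with $\gamma=\rho$ and exponents $r'\leq r$ to get $\norm{f}_{PJN^+_{q,\gamma,r'}}\leq c_1^{-1}\norm{f}_{PJN^+_{q,\gamma,r}}$, and concatenate. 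Once you insert this reduction, the proof is complete.
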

\begin{proof}

Let $\{R_i\}_{i\in\mathbb N}$ be a collection of pairwise disjoint parabolic subrectangles of $\Omega_T$.
By H\"older's inequality, we have
\begin{align*}
&\Biggl( \dashint_{R_i^+(\gamma)} (f-c_{R_i})_+^r + \dashint_{R_i^-(\gamma)} (f-c_{R_i})_-^r  \Biggr)^\frac{1}{r} \\
&\qquad \ \leq \max\{1, 2^{\frac{1}{r}-1} \} \Biggl( \biggl( \dashint_{R_i^+(\gamma)} (f-c_{R_i})_+^r  \biggr)^\frac{1}{r} + \biggl( \dashint_{R_i^-(\gamma)} (f-c_{R_i})_-^r \biggr)^\frac{1}{r} \Biggr) \\
&\qquad \ \leq \max\{1, 2^{\frac{1}{r}-1} \} \Biggl( \biggl( \dashint_{R_i^+(\gamma)} (f-c_{R_i})_+^s \biggr)^\frac{1}{s} + \biggl( \dashint_{R_i^-(\gamma)} (f-c_{R_i})_-^s \biggr)^\frac{1}{s} \Biggr) \\
&\qquad \ \leq c_0 \Biggl( \dashint_{R_i^+(\gamma)} (f-c_{R_i})_+^s + \dashint_{R_i^-(\gamma)} (f-c_{R_i})_-^s \Biggr)^\frac{1}{s} ,
\end{align*}
where $c_0 = \max\{1, 2^{\frac{1}{r}-1} \} \max\{1, 2^{1 -\frac{1}{s}} \}$.
We observe that
\begin{align*}
& \lvert R_i^+(\gamma) \rvert \Biggl( \dashint_{R_i^+(\gamma)} (f-c_{R_i})_+^s + \dashint_{R_i^-(\gamma)} (f-c_{R_i})_-^s \Biggr)^\frac{q}{s} \\
& \qquad \qquad \leq \biggl( \frac{1-\rho}{1-\gamma} \biggr)^{\frac{q}{s}-1} \lvert R_i^+(\rho) \rvert \Biggl( \dashint_{R_i^+(\rho)} (f-c_{R_i})_+^s + \dashint_{R_i^-(\rho)} (f-c_{R_i})_-^s \Biggr)^\frac{q}{s} .
\end{align*}
By summing over $i\in\mathbb{N}$ and taking supremum over all collections of pairwise disjoint parabolic rectangles,
we arrive at
\[
\norm{f}_{PJN_{q,\gamma,r}^{+}(\Omega_T)} \leq c_0 \biggl( \frac{1-\rho}{1-\gamma} \biggr)^{\frac{1}{s}-\frac{1}{q}} \norm{f}_{PJN_{q,\rho,s}^{+}(\Omega_T)}.
\]

To show the second inequality, we make the restriction $0 < r \leq 1$ so that we can apply Theorem~\ref{global_pJN}. This is not an issue because after establishing the second inequality for $0 < r \leq 1$ we can use the first inequality to obtain the whole range $0 < r \leq s$.
Cavalieri's principle and Theorem~\ref{global_pJN} imply that
\begin{align*}
\int_{R_i^+(\rho)} (f-c)_+^s &= s \int_0^\infty \lambda^{s-1} \lvert R_i^{+}(\rho) \cap \{ (f-c)_+ > \lambda \} \rvert \dla \\
&\leq s \int_{|R_i^{+}(\rho)|^{-\frac{1}{q}} \norm{f}_{PJN_{q,\gamma,r}^{+}(R_i)} }^\infty C \lambda^{s-q-1} \norm{f}_{PJN_{q,\gamma,r}^{+}(R_i)}^q \dla \\ &\qquad + s \int_0^{|R_i^{+}(\rho)|^{-\frac{1}{q}} \norm{f}_{PJN_{q,\gamma,r}^{+}(R_i)}} \lambda^{s-1} |R_i^{+}(\rho)| \dla \\
&= \frac{Cs}{q-s} |R_i^{+}(\rho)|^{1-\frac{s}{q}} \norm{f}_{PJN_{q,\gamma,r}^{+}(R_i)}^s + |R_i^{+}(\rho)|^{1-\frac{s}{q}} \norm{f}_{PJN_{q,\gamma,r}^{+}(R_i)}^s \\
&\leq \frac{Cq}{q-s} |R_i^{+}(\rho)|^{1-\frac{s}{q}} \norm{f}_{PJN_{q,\gamma,r}^{+}(R_i)}^s .
\end{align*}
Similarly, we obtain
\[
\int_{R_i^-(\rho)} (f-c)_-^s \leq \frac{Cq}{q-s} |R_i^{+}(\rho)|^{1-\frac{s}{q}} \norm{f}_{PJN_{q,\gamma,r}^{+}(R_i)}^s .
\]
These estimates imply that
\begin{align*}
\sum_{i=1}^\infty \lvert R_i^+(\rho) \rvert \inf_{c \in \mathbb{R}} \Biggl( \dashint_{R_i^+(\rho)} (f-c)_+^s + \dashint_{R_i^-(\rho)} (f-c)_-^s \Biggr)^\frac{q}{s}
&\leq \biggl( \frac{2Cq}{q-s} \biggr)^\frac{q}{s} \sum_{i=1}^\infty \norm{f}_{PJN_{q,\gamma,r}^{+}(R_i)}^q \\
&\leq \biggl( \frac{2Cq}{q-s} \biggr)^\frac{q}{s} \norm{f}_{PJN_{q,\gamma,r}^{+}(\Omega_T)}^q .
\end{align*}
Thus, we conclude that
\[
\norm{f}_{PJN_{q,\rho,s}^{+}(\Omega_T)} \leq \biggl( \frac{2Cq}{q-s} \biggr)^\frac{1}{s} \norm{f}_{PJN_{q,\gamma,r}^{+}(\Omega_T)} .
\qedhere
\]
\end{proof}

\end{document}